 \newtheorem{thm}{Theorem}[section]
 \newtheorem{cor}[thm]{Corollary}
 \newtheorem{lem}[thm]{Lemma}
 \newtheorem{prop}[thm]{Proposition}
 \theoremstyle{definition}
 \newtheorem{defn}[thm]{Definition}
 \theoremstyle{remark}
 \newtheorem{rem}[thm]{Remark}
 \newtheorem*{ex}{Example}
 \numberwithin{equation}{section}
\def\s{\smallskip}
\def\m{\medskip}
\def\noi{\noindent}
\def\d{\displaystyle}
\def\text{\mbox}
\begin{document}

\author{S.Capparelli and A. Del Fra}
\title{Some results on Dyck paths\\ and Motzkin paths}
\maketitle
\begin{abstract}
We introduce an equivalence relation on the set of Dyck paths and some operations on them. We determine a formula for the cardinality of those equivalence classes and use this information to obtain a combinatorial formula for the number of Dyck and Motzkin paths of a fixed length.
\end{abstract}

\section{Introduction}In  \cite{CD}, among other things, we study a certain equivalence relation on the set of Dyck paths
and we compute the cardinality of the equivalence classes. We then use this information to give a combinatorial formula for the number of Dyck and Motzkin paths of a fixed length.
Some of these results were obtained by \cite{FLA} using continued fractions. In this paper we give a more detailed exposition of of the results in \cite{CD} including some proofs there omitted.
\section{ Motzkin paths with horizontal steps at a single level}
\begin{defn}
 A {\it Motzkin path} of length $n$ is a path on the integral lattice $\mathbb{Z}\times \mathbb{Z}$ starting from $(0,0)$ and ending in $(k,0)$ using $n$ steps according to the vectors  $(1,1)$, $(1,0)$, $(1,-1)$ and never going below the $x$-axis, (\cite{AIG}).
\end{defn}
For example

$\begin{picture}(300,-70)(-50,30)
\setlength{\unitlength}{.7mm}

\put(2,1){}
\put(13,1.7){}
\put(17,9){}
\put(28,9.7){}
\put(33,17){}
\put(37.5,9.7){}
\put(46.5,9){}
\put(52,1.7){}
\put(62,1){}
\put(73,1.7){}
\put(79,9){}
\put(82,1.7){}
\put(0,0){\line(1,0){10}}
\put(10,0){\line(2,3){5}}
\put(15,7.5){\line(1,0){10}}
\put(25,7.5){\line(2,3){5}}
\put(30,15){\line(1,0){10}}
\put(40,15){\line(2,-3){5}}
\put(45,7.5){\line(1,0){10}}
\put(55,7.5){\line(2,-3){5}}
\put(60,0){\line(1,0){10}}
\put(70,0){\line(2,3){5}}
\put(75,7.5){\line(1,0){10}}
\put(85,7.5){\line(2,-3){5}}

\end{picture}$

\vskip 0.5in

\begin{defn}
 A Motzkin path with no $(1,0)$ steps is called a {\it Dyck path} (\cite{DEU}).
\end{defn}

In this section we want  to count the number of Motzkin paths that have horizontal steps only at a given fixed level. For this, it is important to count  how many Dyck paths have two feet, three feet, four feet, etc. defining a foot to be a point where the path touches the fixed level. For example,  it is clear that there is only one Dyck path with length two and this has two feet at level zero:

 \vspace{.5cm}
\begin{tikzpicture}[scale=0.5]
\tikzstyle{every node}=[draw,circle,fill=black,minimum size=2pt,inner sep=0pt,radius=1pt]
      \node (A) at (0,0) {};
       \node (B) at (1,1)  {} ;
      \node (C) at (2,0) {};
       \draw (A)--(B);
      \draw (B)--(C);
       \end{tikzpicture}

 \noi Among the paths with length 3, one has two feet, one has three feet at level zero.

\vspace{.5cm}
\begin{tikzpicture}[scale=0.5]
\tikzstyle{every node}=[draw,circle,fill=black,minimum size=2pt,inner sep=0pt,radius=1pt]
      \node (A) at (0,0) {};
       \node (B) at (1,1)  {} ;
      \node (C) at (2,2) {};
\node (D) at (3,1) {};
\node (E) at (4,0) {};
       \draw (A)--(B);
      \draw (B)--(C);
      \draw (C)--(D);
      \draw (D)--(E);
      \node (A1) at (5,0) {};
       \node (B1) at (6,1)  {} ;
      \node (C1) at (7,0) {};
\node (D1) at (8,1) {};
\node (E1) at (9,0) {};
       \draw (A1)--(B1);
      \draw (B1)--(C1);
      \draw (C1)--(D1);
      \draw (D1)--(E1);
       \end{tikzpicture}

 \noi Among the five paths with  length 6, two have three feet, two have two feet, one has four feet at level zero:

\vspace{.5cm}
\begin{tikzpicture}[scale=0.2]
\tikzstyle{every node}=[draw,circle,fill=black,minimum size=2pt,inner sep=0pt,radius=1pt]
      \node (A) at (0,0) {};
       \node (B) at (1,1)  {} ;
      \node (C) at (2,2) {};
      \node (D) at (3,3) {};
      \node (E) at (4,2) {};
      \node (F) at (5,1) {};
      \node (G) at (6,0) {};
       \draw (A)--(B);
      \draw (B)--(C);
      \draw (C)--(D);
      \draw (D)--(E);
      \draw (E)--(F);
      \draw (F)--(G);
      \node (A1) at (7,0) {};
       \node (B1) at (8,1)  {} ;
      \node (C1) at (9,2) {};
      \node (D1) at (10,1) {};
      \node (E1) at (11,2) {};
      \node (F1) at (12,1) {};
      \node (G1) at (13,0) {};
       \draw (A1)--(B1);
      \draw (B1)--(C1);
      \draw (C1)--(D1);
      \draw (D1)--(E1);
      \draw (E1)--(F1);
      \draw (F1)--(G1);
      \node (A2) at (14,0) {};
       \node (B2) at (15,1)  {} ;
      \node (C2) at (16,2) {};
      \node (D2) at (17,1) {};
      \node (E2) at (18,0) {};
      \node (F2) at (19,1) {};
      \node (G2) at (20,0) {};
       \draw (A2)--(B2);
      \draw (B2)--(C2);
      \draw (C2)--(D2);
      \draw (D2)--(E2);
      \draw (E2)--(F2);
      \draw (F2)--(G2);
             \node (A3) at (21,0) {};
       \node (B3) at (22,1)  {} ;
      \node (C3) at (23,0) {};
      \node (D3) at (24,1) {};
      \node (E3) at (25,2) {};
      \node (F3) at (26,1) {};
      \node (G3) at (27,0) {};
       \draw (A3)--(B3);
      \draw (B3)--(C3);
      \draw (C3)--(D3);
      \draw (D3)--(E3);
      \draw (E3)--(F3);
      \draw (F3)--(G3);
            \node (A4) at (28,0) {};
       \node (B4) at (29,1)  {} ;
      \node (C4) at (30,0) {};
      \node (D4) at (31,1) {};
      \node (E4) at (32,0) {};
      \node (F4) at (33,1) {};
      \node (G4) at (34,0) {};
       \draw (A4)--(B4);
      \draw (B4)--(C4);
      \draw (C4)--(D4);
      \draw (D4)--(E4);
      \draw (E4)--(F4);
      \draw (F4)--(G4);
      \end{tikzpicture}

\noi If we arrange the resulting numbers in a Pascal-like triangle, we have:
\begin{center}
    \begin{tabular}{ | l | l | l | l | l |l |l |}
    \hline
     & 1-ped  & 2-ped & 3-ped &4-ped &5-ped &6-ped \\
    \hline
    0 steps & 1  & 0 & 0 &0& 0&0\\
    \hline
    2 steps & 0  & 1 & 0&0&0&0\\
    \hline
    4 steps &0& 1  & 1 & 0&0&0\\
    \hline
    6 steps &0& 2  & 2 & 1& 0 & 0\\ \hline
    8 steps &0& 5 & 5 & 3&1&0
     \\ \hline
    10 steps &0& 14 & 14 & 9&4&1 \\ \hline
     12 steps &0& 42 & 42 & 28  &14&5\\  \hline
    \end{tabular}
\end{center}
See below for a definition of $n$-ped.
\begin{rem}\label{conjconv}
The natural conjecture one can formulate by looking at this table is that the second column is made up of the Catalan numbers, while the other columns are convolutions of the Catalan number sequence. We shall have something to say about this later on.
\end{rem}

To count Motzkin path with horizontal level at a single level , we need to count the number of Dyck paths that at a certain level have a given number of feet. This can be done in a recursive fashion. Start by counting the number of feet of Dyck paths at level 0. For this we use a recursive construction suggested by the following recursive formula of the Catalan numbers:
 \begin{equation}\label{leggeCatalan}
C(n)=\sum_{k=0}^{n-1}C(n-1-k)C(k).
\end{equation}
We interpret this formula as follows.
Any length $2n$  Dyck path can be obtained by a combination of two operations. The first operation consists in adding to each path of length  $2(n-1-k)$ an up step at the beginning and a down step at the end. We  call this operation {\bf lifting}. The second operation consists in {\bf gluing} at the end of it any path of length  $2k$,  $k=0,\ldots, n-1$.

Next, we observe that every  Dyck path is obtained by a sequence of up and down steps, indicated by   U and D, respectively, in such a way that the total number of  U's is equal the number of  D's and that at each  step the number of  U's be not less than the number of D's. Let's associate to each   U the number  1 and to each D the number $-1$. Starting from  0, let's sum at each step the numbers  1 and $-1$ up to that point. For a length $2n$ path, the {\it associated } sequence thus obtained starts and ends with   0 and the maximum number appearing is at most $n$. For each integer  $i\ge 0$, denoted by  $p(i)$ the number of times that  $i$ appears in the sequence, we shall say that the path  is $p(i)$-ped at the level $i$. As an example, the length 14 path UUDUUDDUUDUDDD has the associated sequence 012123212323210.  In other words, the  sequence 012123212323210 is simply the sequence of the $y$ coordinate of the points touched by the path. The path is therefore  biped at level 0, quadruped at level 1, 6-ped at level 2, 3-ped at level 3, 0-ped at higher levels.
We may sometimes identify a path with its associated sequence.

For every triple of integers $2n\!\ge\! 0$, $i\ge 0$, $j\ge 0$, we denote by $p^{2n}_{i,j}$ ($
p^{length}_{level,\# feet}
$)
the number of Dyck paths of length $2n$ that are $j$-ped at level $i$.

Our next objective is to compute $p^{2n}_{i,j}$. We shall do this using a recursive rule first on  $i$  and then on  $n$.

The starting point is to compute $p^{2n}_{0,j}$, namely, how many paths of length $2n$ have $j$ feet at level 0, as $n$ grows.
\begin{prop}\label{piediazero}
  We have the following recursive formulas
\begin{equation}p^0_{0,0}=0, p^0_{0,1}=1, p^0_{0,j}=0, j>1\end{equation}

\begin{equation}p^2_{0,0}=0, p^2_{0,1}=0, p^2_{0,2}=1, p^2_{0,j}=0, j>2\end{equation}

  \begin{equation}\label{zeropodi}
p^{2n}_{0,0}=0, p^{2n}_{0,1}=0, \d p^{2n}_{0,2}=\sum_{k=0}^\infty p^{2(n-1)}_{0,k}, n>1
\end{equation}

  \begin{equation}\label{generalpode}
p^{2n}_{0,j}=\sum_{i=0}^{n-2} p^{2(i+1)}_{0,2}p^{2(n-i-1)}_{0,j-1}, j>2, n>1
\end{equation}

\end{prop}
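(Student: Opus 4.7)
The plan is to verify the base cases by direct inspection and then derive the two recursive identities by a first-return decomposition of a Dyck path, in the spirit of the lifting/gluing construction introduced just before the statement. For the base cases I would simply note that the only Dyck path of length $0$ is the degenerate one-point path, which touches level $0$ exactly once (so $p^0_{0,1}=1$ and the rest vanish); the only Dyck path of length $2$ is UD with associated sequence $0,1,0$, giving $p^2_{0,2}=1$; and for $n>1$, any Dyck path of length $2n$ starts at $(0,0)$ and ends at $(2n,0)$ and therefore has at least two feet at level $0$, forcing $p^{2n}_{0,0}=p^{2n}_{0,1}=0$.

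For (\ref{zeropodi}) I would argue that a Dyck path of length $2n$ which is biped at level $0$ touches level $0$ only at its endpoints; it must therefore start with U and end with D, with the intermediate portion never returning to level $0$. Stripping off the first U and the last D yields a Dyck path of length $2(n-1)$, and this map is inverse to the lifting operation, hence a bijection. Consequently $p^{2n}_{0,2}$ equals the total number of length-$2(n-1)$ Dyck paths, which is $\sum_{k=0}^\infty p^{2(n-1)}_{0,k}$.

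For (\ref{generalpode}), given a Dyck path $P$ of length $2n$ with $j>2$ feet at level $0$, the idea is to record the abscissa $2(i+1)$ of its \emph{first} return to level $0$ and split $P$ into the initial arch $A$ of length $2(i+1)$ (biped at level $0$ by construction) and a remaining Dyck path $P'$ of length $2(n-i-1)$. The feet of $P$ at level $0$ are those of $A$ together with those of $P'$, minus one for the joining point shared by both pieces, so $j = 2 + (\text{feet of }P') - 1$, forcing $P'$ to be $(j-1)$-ped. This decomposition is a bijection, and requiring both pieces to be nontrivial (length $\geq 2$) yields the range $0\leq i\leq n-2$; summation gives the claimed convolution. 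The one subtle point, which I would be careful to emphasize, is the accounting of the shared joining foot so as not to double-count it; once that bookkeeping is handled the rest is essentially immediate.
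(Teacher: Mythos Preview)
Your proposal is correct and takes essentially the same approach as the paper: the base cases are handled identically, and your first-return decomposition is precisely the paper's lifting/gluing construction viewed in reverse (the initial arch of length $2(i+1)$ is a lifted path of length $2i$, and your remaining $(j-1)$-ped piece $P'$ is the glued factor). The paper records the foot-count bookkeeping with the one-line observation that gluing a $2$-ped to a $(j-1)$-ped yields a $j$-ped, which is exactly your shared-joining-point remark.
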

\begin{proof}

There is a unique Dyck  path of length  0, which we call {\it null path} and denote by $\mathbf 0$. The corresponding numerical sequence is 0 and so this path is  1-ped at level 0. The null path clearly has a single foot and nothing else:

$$p^0_{0,0}=0, p^0_{0,1}=1, p^0_{0,j}=0, j>1.$$

The path UD,
\begin{tikzpicture}[scale=0.2]
\tikzstyle{every node}=[draw,circle,fill=black,minimum size=2pt,inner sep=0pt,radius=1pt]
      \node (A) at (0,0) {};
       \node (B) at (1,1)  {} ;
      \node (C) at (2,0) {};
       \draw (A)--(B);
      \draw (B)--(C);
       \end{tikzpicture},
 is the only one of length 2. The associated sequence is 010. It is therefore 2-ped at level 0, 1-ped at level 1 and 0-ped at higher levels. It follows:

$$p^2_{0,0}=0, p^2_{0,1}=0, p^2_{0,2}=1, p^2_{0,j}=0, j>2.$$

Before proceeding further, we observe that by lifting any path we get a 2-ped at level 0 and gluing a 2-ped at level zero to a  $j$-ped at level 0 we get  a $(j+1)$-ped at level 0. Moreover, we notice that there is no path of length greater than zero that is  0-ped or 1-ped at level 0. Hence:

$$p^{2n}_{0,0}=0, p^{2n}_{0,1}=0,  n>0.$$

Suppose we  have determined all $p^{2m}_{0,j}$ up to an even fixed length $2(n-1)$,  $m\le n-1$, let us compute $p^{2n}_{0,j}$. All the  2-ped at level 0 of length  $2n$ are obtained by lifting paths of length $2(n-1)$ and gluing to them the null path $\mathbf 0$.
Therefore
\begin{equation}\label{zeropodi1}
\d p^{2n}_{0,2}=\sum_{k=0}^\infty p^{2(n-1)}_{0,k}.
\end{equation}
\noi
The sum is actually finite since $p^{2(n-1)}_{0,k}$ is zero for $k> n$.

For $j>2$, the $j$-ped paths at level 0 of length $2n$ are constructed by lifting paths of length $2i$, $0\le i\le n-2$, with any number  $k$ of feet, thus becoming  2-ped, and gluing to them $(j-1)$-ped paths of length  $2(n-i-1)$. It follows that

\begin{equation}\label{generalpodetemp}
p^{2n}_{0,j}=\sum_{i=0}^{n-2}(\sum_{k=0}^{\infty}p^{2i}_{0,k})p^{2(n-i-1)}_{0,j-1}.
\end{equation}

Since $\d \sum_{k=0}^{\infty}p^{2i}_{0,k}=p^{2(i+1)}_{0,2}$, by (\ref{zeropodi}),   formula \ref{generalpodetemp} can also be written as

\begin{equation}\label{generalpode1}
p^{2n}_{0,j}=\sum_{i=0}^{n-2} p^{2(i+1)}_{0,2}p^{2(n-i-1)}_{0,j-1}
\end{equation}

This is the general formula for the number of $j$-ped paths   at level zero for any length.
\end{proof}

\begin{rem}

This formula proves the observation made earlier  (see Remark \ref{conjconv}), and it is the interpretation of formula (\ref{leggeCatalan}).
\end{rem}

\begin{prop}\label{piediinalto}
  We have the following recursive formula
\begin{equation}
p^0_{s,0}=1,\, p^0_{s,j}=0, s>0, j>0
\end{equation}

\begin{equation}
 p^{2n}_{s,j}=\sum_{i=0}^{n-1} \sum_{k=0}^{j}p^{2i}_{s-1,k}p^{2(n-i-1)}_{s,j-k}.
 \end{equation}

\end{prop}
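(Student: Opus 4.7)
The plan is a direct extension of the argument used for Proposition~\ref{piediazero}. For the base case $n=0$, the only Dyck path is the null path $\mathbf{0}$, whose single vertex lies at level $0$; hence it has no feet at any level $s>0$, giving $p^0_{s,0}=1$ and $p^0_{s,j}=0$ for $j>0$.

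For the recursive step with $n\ge 1$, I would use the first-return-to-zero decomposition: any non-null Dyck path $P$ of length $2n$ factors uniquely as $P=P_1\cdot P_2$, where $P_1$ is the prefix ending at the first return to level $0$, occurring at some position $2(i+1)$ with $0\le i\le n-1$, and $P_2$ is the remaining Dyck path, of length $2(n-i-1)$. Moreover $P_1$ itself is a lifted path, that is, $P_1=U\cdot Q\cdot D$ for a unique Dyck path $Q$ of length $2i$, where the lifting operation is the one introduced in Proposition~\ref{piediazero}. This decomposition is clearly a bijection between length-$2n$ Dyck paths (for $n\ge 1$) and pairs $(Q,P_2)$ with $Q$ a Dyck path of length $2i$ and $P_2$ a Dyck path of length $2(n-i-1)$, for some $0\le i\le n-1$.

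The crucial observation for counting feet at level $s\ge 1$ is that the feet of $P_1$ at level $s$ are in bijection with the feet of $Q$ at level $s-1$: indeed $P_1$ touches level $0$ only at its two endpoints, while all of its interior vertices are obtained from the vertices of $Q$ by a vertical shift of $+1$. Because $s\ge 1$, the vertex at position $2(i+1)$ shared by $P_1$ and $P_2$ lies at level $0$ and is therefore irrelevant to the count at level $s$, so the feet of $P$ at level $s$ split additively as (feet of $P_1$ at level $s$) $+$ (feet of $P_2$ at level $s$). Writing $k$ for the first summand, the number of length-$2(i+1)$ choices of $P_1$ is $p^{2i}_{s-1,k}$, while the number of choices of $P_2$ is $p^{2(n-i-1)}_{s,j-k}$; summing over $i$ and $k$ yields the recursion.

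The only step requiring genuine care is the foot-bijection argument: I would make explicit that the interior vertices of $P_1=U\cdot Q\cdot D$ are exactly the translates $(x+1,y+1)$ of the vertices $(x,y)$ of $Q$, so that the condition $y+1=s$ (for $s\ge 1$) is equivalent to $y=s-1$, and the boundary vertices of $P_1$ at level $0$ contribute nothing. Once this bijection is established, the rest of the proof is bookkeeping strictly analogous to the derivation of formula~(\ref{generalpode}).
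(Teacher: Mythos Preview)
Your argument is correct and is essentially the same as the paper's: the first-return decomposition $P=U\,Q\,D\wedge P_2$ is exactly the ``lift then glue'' construction the paper invokes, and your observation that lifting turns feet of $Q$ at level $s-1$ into feet of $P_1$ at level $s$ is precisely what the paper uses (more tersely) to justify the double sum. Your treatment is in fact more careful than the paper's, since you explicitly verify that the shared vertex at level $0$ causes no overcount for $s\ge 1$.
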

\begin{proof}
Let us now proceed to compute the number  $p^{2n}_{1,j}$, for level 1. Obviously

$$p^0_{1,0}=1,\, p^0_{1,j}=0, j>0,$$

$$p^0_{s,0}=1,\, p^0_{s,j}=0, s>1, j>0$$

Suppose we determined all $p^{2m}_{s,j}$,  $m\le n-1$, any $s$ and any $j$, up to a fixed even length $2(n-1)$. We wish to compute $p^{2n}_{s,j}$. The $j$-ped paths  at level $s$ of length  $2n$ are obtained by lifting $k$-ped paths at level $s-1$, $0\le k\le j$, of length $2i$, $0\le i\le n-1$, and gluing to them the $(j-k)$-ped at level $s$ and length $2(n-i-1)$.
Hence
\begin{equation}
 p^{2n}_{s,j}=\sum_{i=0}^{n-1} \sum_{k=0}^{j}p^{2i}_{s-1,k}p^{2(n-i-1)}_{s,j-k}.
 \end{equation}
\end{proof}

The knowledge of the number of $j$-ped Dyck paths at a level $s$ allows us to compute a sort of partial binomial transform at a single level, namely counting  the Motzkin paths having horizontal steps only at a prefixed level.

We now use  Propositions \ref{piediazero} e \ref{piediinalto} to compute the number of  Motzkin paths having horizontal steps only at a level  $k$,  $k\ge 0$ a fixed integer. We shall call such paths $k$-Motzkin paths and shall denote by $m_k^n$ the number of  $k$-Motzkin paths of length $n$.

\begin{thm}\label{thkmotzkin}
  The number of  $k$-Motzkin paths of length $n$ is
  \begin{equation}\label{kmotzkin}
m_k^{(n)}= \sum_{j=0}^{\nu}\sum_{i=0}^{\infty} p^{2j}_{k,i} \binom{n-2j+i-1}{n-2j}.
\end{equation}

\end{thm}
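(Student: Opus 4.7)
The plan is to set up a bijective decomposition of a $k$-Motzkin path of length $n$ into (i) its underlying Dyck skeleton and (ii) the placement of the horizontal steps at level $k$. First I would observe that, since horizontal steps of a $k$-Motzkin path appear only at level $k$, simply deleting them yields a Dyck path $D$ of some even length $2j$ with $0\le 2j\le n$, and all $n-2j$ deleted steps must have been attached to feet of $D$ at level $k$. Conversely, given a Dyck path $D$ of length $2j$ that is $i$-ped at level $k$, re-inserting $n-2j$ horizontal steps amounts to prescribing a weak composition of $n-2j$ into $i$ ordered nonnegative parts, one per foot of $D$ at level $k$.

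Once that bijection is in place, the rest is routine counting. The number of weak compositions of $n-2j$ into $i$ nonnegative parts is, by stars and bars,
\[
\binom{n-2j+i-1}{i-1}=\binom{n-2j+i-1}{n-2j}.
\]
Multiplying by the number $p^{2j}_{k,i}$ of Dyck skeletons of length $2j$ that are $i$-ped at level $k$ (supplied by Propositions \ref{piediazero} and \ref{piediinalto}) and summing over the admissible ranges $0\le j\le\nu=\lfloor n/2\rfloor$ and $i\ge 0$ (the inner sum is finite since $p^{2j}_{k,i}$ vanishes once $i$ exceeds $2j+1$) yields precisely \eqref{kmotzkin}.

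The main point requiring care is the boundary case $i=0$: a Dyck skeleton that never touches level $k$ cannot carry horizontal steps, so it should contribute only when $n=2j$. With the convention $\binom{-1}{0}=1$, the binomial $\binom{n-2j-1}{n-2j}$ equals $0$ whenever $n-2j>0$ and equals $1$ when $n=2j$, which exactly accounts for pure Dyck paths of length $n$ that avoid level $k$. Apart from this small piece of bookkeeping, I do not foresee a serious obstacle; the substantive combinatorics is already encoded in the recursions established for the $p^{2j}_{k,i}$, and the Motzkin count reduces to stars-and-bars on top of them.
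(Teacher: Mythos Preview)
Your proposal is correct and follows essentially the same argument as the paper: decompose a $k$-Motzkin path into its Dyck skeleton plus a placement of horizontal steps at the level-$k$ feet, count the latter via weak compositions (stars and bars), and treat the $i=0$ boundary exactly as the paper does. The only cosmetic difference is that the paper records the slightly sharper vanishing bounds $p^{2j}_{0,i}=0$ for $i>j+1$ and $p^{2j}_{k,i}=0$ for $i>j$ when $k>0$, whereas you use the coarser (but still valid) bound $i\le 2j+1$.
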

\begin{proof}

Notice that a  $k$-Motzkin path of length  $n$ can be obtained from a  Dyck path $D$ with length $2j$, $0\le 2j\le n$, by adding  $n-2j$ horizontal steps at the  feet of level $k$ of $D$. If $D$ is $i$-ped at level $k$, this can be done in as many ways as there are possibility to express  $n-2j$ as sum of  $i$ integers between  0 and $n-2j$, namely, in $\binom{n-2j+i-1}{n-2j}$ ways. Set $\nu={\left[ {\frac{n}{2}}\right]}$.

It follows
\begin{equation*}
m_{k}^{(n)}= \sum_{j=0}^{\nu}\sum_{i=0}^{\infty} p^{2j}_{k,i} {\binom{n-2j+i-1}{n-2j}}.
\end{equation*}
Notice also that for $n=2j$,  the binomial coefficient is 1  when $i=0$. This is in accordance with the fact   0-ped Dyck paths at level  $k$ of length  $n$ are to be considered in the evaluation of $m_k^n$. Finally, notice that the infinite sum is actually finite since   $p^{2j}_{0,i}$ is zero for $i> j+1$ and $p^{2j}_{k,i}$, with $k>0$, is zero for $i> j$.
\end{proof}

If we give $r$ colors to the horizontal steps at level  $k$, then  formula (\ref{kmotzkin}), where we continue to use a self-explanatory notation, obviously becomes
\begin{prop}
$$m_{k,r}^{(n)}= \sum_{j=0}^{\nu}\sum_{i=0}^{\infty} p^{2j}_{k,i} {\binom{n-2j+i-1}{n-2j}}r^{n-2j}.$$
\end{prop}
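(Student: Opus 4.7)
The plan is to mimic the proof of Theorem \ref{thkmotzkin} almost verbatim, inserting a multiplicative factor to account for colorings of the horizontal steps. Concretely, I would observe that a $k$-Motzkin path of length $n$ is, by definition, built from an underlying Dyck path $D$ of some even length $2j$ (with $0 \le 2j \le n$) by inserting $n-2j$ horizontal steps, all of which must be placed at feet of level $k$ of $D$. The structure ``pick a Dyck shape, then distribute the horizontal steps among its $i$ feet at level $k$'' is exactly what produces the binomial coefficient $\binom{n-2j+i-1}{n-2j}$ in Theorem \ref{thkmotzkin}.

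The only additional ingredient is the coloring. Since the $n-2j$ horizontal steps are distinguishable positions on the path and each may independently be assigned one of the $r$ available colors, the number of colorings of any fixed uncolored $k$-Motzkin path with underlying Dyck length $2j$ is $r^{n-2j}$. This factor depends only on $j$ (not on $i$, not on the specific distribution of horizontals among feet), so it may be pulled inside the $j$-sum.

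With this observation the argument is essentially mechanical. First I would fix $j$ and $i$, count the Dyck shapes of length $2j$ that are $i$-ped at level $k$ (this is $p^{2j}_{k,i}$), multiply by the number of ways $\binom{n-2j+i-1}{n-2j}$ of placing $n-2j$ indistinguishable horizontal steps at the $i$ available feet, and then multiply by $r^{n-2j}$ for the colorings. Summing over $i \ge 0$ (which is really a finite sum, by the same tail-vanishing remark as in Theorem \ref{thkmotzkin}) and over $j = 0, 1, \dots, \nu = \lfloor n/2 \rfloor$ yields exactly the claimed formula
\[
m_{k,r}^{(n)} = \sum_{j=0}^{\nu}\sum_{i=0}^{\infty} p^{2j}_{k,i}\binom{n-2j+i-1}{n-2j} r^{n-2j}.
\]

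There is no real obstacle: the only thing one must be a little careful about is that the colorings are assigned to the horizontal steps \emph{after} the shape and placement of those horizontals have been fixed, so that the $r^{n-2j}$ factor is independent of the choices already counted by $p^{2j}_{k,i}\binom{n-2j+i-1}{n-2j}$ and may be factored out cleanly. Once that independence is noted, the formula is just Theorem \ref{thkmotzkin} with the extra factor, and the boundary case $n = 2j$ (no horizontals, contributing $r^0 = 1$) is consistent with the uncolored statement.
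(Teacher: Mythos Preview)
Your proposal is correct and matches the paper's approach exactly: the paper does not even write out a proof, stating only that formula (\ref{kmotzkin}) ``obviously becomes'' the displayed expression once the horizontal steps at level $k$ are given $r$ colors. Your argument is precisely the obvious one the authors have in mind---reuse the count from Theorem \ref{thkmotzkin} and multiply by $r^{n-2j}$ for the independent coloring of the $n-2j$ horizontal steps.
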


\section{Frames of Dyck paths}
Next, we want to add horizontal steps at more then one level. To do this it is not  sufficient anymore to know the number of Dyck paths that are $j$-ped at the various levels, but we need to have  more refined information, that is, we need to know the number of Dyck paths that have a fixed ``frame'' $(i_0,i_1,\ldots)$, according to the definition we are about to give.

Given a  Dyck path $D$, with $i_k$ feet at level $k$, $k=0,1,\ldots$, we call the sequence  $(i_0,i_1,\ldots)$, which is zero from a certain index on, the  {\it frame} of $D$. In other words, the sequence tells us how many feet there are at each level.

\noi This sequence is zero from a certain index on, since, if the length of $D$ is $2n$, one has $i_{n+j}=0$, for $j\ge 1$, namely, the maximum reachable level for a Dyck path of length $2n$ is $n$.

Given an eventually zero sequence $\mathbf{I} =(i_0,i_1, ...)$,  we shall call the integer
  $\d -1+\sum_{k=0}^{\infty}i_k $
  the {\it length} of $\mathbf{I}$ .

We shall say that an eventually zero sequence is  {\it admissible} if it is the frame of some  Dyck path.  For an admissible frame $\mathbf{I}$, its length is the same as the length of a path with that frame and is necessarily even.

For example, frames of Dyck paths of length 0,2,4 are, respectively:
\begin{itemize}

\item $(1,0,0,\ldots) $,

\item $(2,1,0,\ldots)$,

\item $(2,2,1,0,\ldots), (3,2,0,\ldots)$.
\end{itemize}

Notice that two distinct paths with the same length may have the same frame.\! For example, the two paths with length  6:\! $UDUUDD$ and $UUDDUD$, which give rise to the sequences $0101210$ and $0121010$ are, respectively,

\vspace{.3cm}
\begin{tikzpicture}[scale=0.2]
\tikzstyle{every node}=[draw,circle,fill=black,minimum size=2pt,inner sep=0pt,radius=1pt]
      \node (A) at (0,0) {};
       \node (B) at (1,1)  {} ;
      \node (C) at (2,0) {};
      \node (D) at (3,1) {};
      \node (E) at (4,2) {};
      \node (F) at (5,1) {};
      \node (G) at (6,0) {};
       \draw (A)--(B);
      \draw (B)--(C);
      \draw (C)--(D);
      \draw (D)--(E);
      \draw (E)--(F);
      \draw (F)--(G);
      \node (A1) at (10,0) {};
       \node (B1) at (11,1)  {} ;
      \node (C1) at (12,2) {};
      \node (D1) at (13,1) {};
      \node (E1) at (14,0) {};
      \node (F1) at (15,1) {};
      \node (G1) at (16,0) {};
       \draw (A1)--(B1);
      \draw (B1)--(C1);
      \draw (C1)--(D1);
      \draw (D1)--(E1);
      \draw (E1)--(F1);
      \draw (F1)--(G1);
       \end{tikzpicture}

\noi and have the same frame $(3,3,1,0,0,\ldots)$.

\vspace{.3cm}

Obviously ``having the same frame'' is an equivalence relation on the set of   Dyck paths. Notice that lifting two equivalent paths  we still get two equivalent  paths. This is true also if we glue together two paths: if $\mathbf p_i$ is equivalent to $\mathbf q_i$, $i=1,2$ then gluing  $\mathbf p_1$ and $\mathbf p_2$ gives a path that is equivalent to gluing $\mathbf q_1$ and $\mathbf q_2$.
This observation allows us to speak of  {\it lifting a frame} and  {\it gluing two frames}.

It is easy to construct recursively the frames associated to Dyck paths of various lengths, using the same recursive law for the paths.

\noi Lifting the frame $(i_0,i_1,i_2,\ldots)$ one gets the frame $(2,i_0,i_1,i_2,\ldots)$.
While gluing two frames  $(j_0,j_1,j_2,\ldots)$ and $(i_0,i_1,i_2,\ldots)$, we get a frame $(i_0+j_0-1,i_1+j_1,i_2+j_2,\ldots)$.

For the lifting and gluing operations we shall use the following symbols:
$$s(i_0,i_1,i_2,\ldots) = (2,i_0,i_1,i_2,\ldots),$$ $$(i_0,i_1,i_2,\ldots)\wedge (j_0,j_1,j_2,\ldots) = (i_0+j_0-1,i_1+j_1,i_2+j_2,\ldots)$$

It is not difficult to check, for example, that the frames with length 4 can be obtained in this fashion from those with shorter lengths.

The gluing operation on paths is associative but not commutative: in general, $\mathbf u\wedge \mathbf v \ne \mathbf v\wedge \mathbf u$. The same operation on frames however is commutative, namely,  $\mathbf u\wedge \mathbf v$ and $\mathbf v\wedge \mathbf u$ have the same frame.

Actually, any frame can be obtained by the combination of two elementary operations: {\it lifting} of frame $(i_0,i_1,i_2,\ldots)$, which turns  $(i_0,i_1,i_2,\ldots)$ into $s(i_0,i_1,i_2,\ldots)=(2,i_0,i_1,i_2,\ldots)$ and the gluing $(i_0,i_1,i_2,\ldots)$ to the frame of length  2, which we call {\it extension}, that turns  $(i_0,i_1,i_2,\ldots)$ into $a(i_0,i_1,i_2,\ldots)=(i_0,i_1,i_2,\ldots)\wedge (2,1,0,\ldots)=(i_0+1,i_1+1,i_2,\ldots)$. We have therefore:

\m\noi
\begin{thm}\label{costruzione}
Every frame can be obtained by a combination of a lifting and a suitable number of extensions.

\end{thm}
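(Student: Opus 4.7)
The plan is to argue by strong induction on the length $2n$ of the admissible frame $\mathbf{I}=(i_0,i_1,i_2,\ldots)$. The base case $n=0$ is immediate, since the only admissible frame of length $0$ is the null frame $(1,0,0,\ldots)$, which requires no operations.

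For the inductive step, suppose $\mathbf{I}$ is admissible of length $2n\ge 2$, so in particular $i_0\ge 2$. I would split into two cases according to $i_0$. If $i_0=2$, then $\mathbf{I}=s(\mathbf{J})$ with $\mathbf{J}=(i_1,i_2,\ldots)$. Any Dyck path $D$ with frame $\mathbf{I}$ has a single arch, so it has the form $D=UPD$ where $P$ is a Dyck path whose frame is $\mathbf{J}$. Hence $\mathbf{J}$ is admissible of length $2n-2$, and the induction hypothesis applied to $\mathbf{J}$, followed by one lifting, produces $\mathbf{I}$.

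If $i_0\ge 3$, set $\mathbf{I}'=(i_0-1,i_1-1,i_2,i_3,\ldots)$, so that $\mathbf{I}=a(\mathbf{I}')$ by the very definition of the extension operation; note that $i_1-1\ge 0$ because each of the $i_0-1\ge 2$ arches of any path with frame $\mathbf{I}$ contributes at least one foot at level $1$. It remains to exhibit a Dyck path with frame $\mathbf{I}'$. Starting from any Dyck path $D$ with frame $\mathbf{I}$, decompose it at level $0$ into its $m=i_0-1\ge 2$ arches $A_1,\ldots,A_m$, with $A_j=UP_jD$, and form
\[
D' \;=\; U(P_1\wedge P_2)D\;\wedge\; A_3\;\wedge\;\cdots\;\wedge\; A_m,
\]
i.e.\ merge the interiors of the first two arches into a single new arch $U(P_1\wedge P_2)D$ and retain the rest. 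Applying the lifting and gluing formulas already recorded in the paper, the new arch has frame $(2,\,p_0^{(1)}+p_0^{(2)}-1,\,p_1^{(1)}+p_1^{(2)},\,\ldots)$, and subsequent gluing with $A_3,\ldots,A_m$ yields exactly $\mathbf{I}'$: the total number of arches has dropped by $1$ (so $i_0$ decreases by $1$), the single foot absorbed at level $1$ by the merge decreases $i_1$ by $1$, and all $i_k$ with $k\ge 2$ are unchanged. Since $\mathbf{I}'$ has length $2n-2$, the induction hypothesis gives a sequence of liftings and extensions producing $\mathbf{I}'$, and one further extension then yields $\mathbf{I}$.

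The only nontrivial point is the frame computation for $D'$ in the case $i_0\ge 3$. I do not expect a real obstacle there: the merge construction is tailored to be the exact inverse on frames of gluing with the length-$2$ frame $(2,1,0,\ldots)$, which is precisely the operation $a$. The subtlety worth flagging is that the analogous strategy of trying to \emph{delete} a length-$2$ arch from $D$ does not always apply, since a given path with frame $\mathbf{I}$ need not contain a $UD$ arch (e.g.\ $UUDDUUDD$ with frame $(3,4,2,0,\ldots)$); the merge, by contrast, is available for every choice of $D$ whenever $m\ge 2$, which is why it furnishes a uniform induction.
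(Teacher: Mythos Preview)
Your proof is correct. The core move is in fact the same as the paper's: your ``merge'' $U(P_1\wedge P_2)D$ is exactly what one gets by deleting the $DU$ pair at the first interior return to level~$0$ in $A_1\wedge A_2=UP_1DUP_2D$. The paper then appends a compensating $UD$ at the end, obtaining a path of the \emph{same} frame but with one fewer interior return, and iterates this rewriting at the path level until the path has the shape $s(\mathbf p)\wedge s(\mathbf 0)^{\wedge k}=a^{k}(s(\mathbf p))$. You instead omit the appended $UD$, land in the shorter frame $b(\mathbf I)$, and run a clean strong induction on the length. Unrolling your case $i_0\ge 3$ exactly $i_0-2$ times recovers the paper's conclusion $\mathbf I=a^{\,i_0-2}(s(\mathbf J))$. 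So the two arguments differ only in bookkeeping: iterative rewriting within a fixed frame versus induction along the chain $\mathbf I,\,b(\mathbf I),\,b^2(\mathbf I),\ldots$; your organization has the minor advantage of making explicit that $b(\mathbf I)$ is admissible whenever $i_0\ge 3$ (a fact the paper uses later as Lemma~\ref{two}).
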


\noi
\begin{proof}
 Consider any path U...D that is not a lifting, i.e., with a frame starting with  $i_0\ge 3$. It follows that its sequence is of the form $01\ldots 101\ldots 10$, with at least one subsequence  $101$ inside. The first triple  $101$ inside comes from an ordered pair  DU. If we eliminate such a pair inside and add instead the pair  $UD$ at the end of the frame, we get a new path with the same frame as the previous one. Iterating such a procedure we get a Dyck path with the same frame as the starting path obtained by gluing a lifting of a suitable path and a finite number, possibly zero, of length 2 paths.
\end{proof}
\m\noi
\begin{thm}
  The number of frames of length  $2n$, with $n>0$, is $2^{n-1}$.
\end{thm}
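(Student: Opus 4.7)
The plan is to prove this by induction on $n$, using Theorem \ref{costruzione} as the main input. Let $f(n)$ denote the number of admissible frames of length $2n$. The base case $n=1$ is immediate: the only frame of length $2$ is $(2,1,0,\ldots)$, giving $f(1)=1=2^0$.

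For the inductive step, I would partition the set of frames of length $2n$ according to the value of the first entry $i_0$. Since the empty (length $0$) frame is $(1,0,0,\ldots)$ and every lifting has first entry exactly $2$, while every extension $E(i_0,i_1,\ldots)=(i_0+1,i_1+1,i_2,\ldots)$ of a frame of length $\ge 2$ yields first entry $\ge 3$, Theorem \ref{costruzione} tells us that every frame of length $2n$ with $n\ge 2$ is either of the form $L(F)$ with $i_0=2$, or of the form $E(G)$ with $i_0\ge 3$, and the two cases are disjoint. Thus I only need to exhibit bijections between each of these two classes and the set of frames of length $2(n-1)$.

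For the class $\{i_0=2\}$, the lifting map $L:F\mapsto (2,F)$ is clearly injective, and its inverse simply drops the first entry; if $(2,i_1,i_2,\ldots)$ is the frame of a Dyck path $P$, then $P=UP'D$ where $P'$ is a Dyck path (shifted up by one level) whose frame is $(i_1,i_2,\ldots)$, so the inverse lands in the set of admissible frames of length $2(n-1)$. For the class $\{i_0\ge 3\}$, the extension map $E$ is injective because one recovers the preimage by subtracting $1$ from both $i_0$ and $i_1$ (note that $i_0\ge 2$ already forces $i_1\ge 1$, so the subtraction is legal); surjectivity onto the class $\{i_0\ge 3\}$ is precisely the content of Theorem \ref{costruzione}, since a frame of length $2n$ with $i_0\ge 3$ is $E^k(L(F))$ with $k\ge 1$, hence $E$ of some frame of length $2(n-1)$.

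Combining these two bijections gives the recursion $f(n)=2f(n-1)$ for $n\ge 2$, and together with $f(1)=1$ this yields $f(n)=2^{n-1}$. The main subtlety is verifying that the inverse of $E$ lands in the set of \emph{admissible} frames (not just eventually-zero sequences of integers); this is exactly what the constructive argument in the proof of Theorem \ref{costruzione} provides, by exhibiting an explicit Dyck path realizing $(i_0-1,i_1-1,i_2,\ldots)$ when one with frame $(i_0,i_1,i_2,\ldots)$, $i_0\ge 3$, is given. Once this admissibility step is granted, the counting is essentially immediate.
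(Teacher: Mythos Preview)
Your proof is correct and follows essentially the same approach as the paper's: both establish the recursion $f(n)=2f(n-1)$ by showing that the lifting $s$ and extension $a$ (your $L$ and $E$) set up a two-to-one correspondence between frames of length $2(n-1)$ and frames of length $2n$, distinguished by whether $i_0=2$ or $i_0\ge 3$. Your write-up is in fact more careful than the paper's, which only states that $s(\mathbf{I})\ne a(\mathbf{I})$ and leaves injectivity, disjointness of images, and surjectivity implicit; you make each of these explicit and correctly identify that the admissibility of $b(\mathbf{I})=(i_0-1,i_1-1,i_2,\ldots)$ is the one nontrivial point, supplied by the construction in Theorem~\ref{costruzione}.
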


\noi
\begin{proof}
The frames of length $2n$ are obtained from those of length  $2(n-1)$ by constructing for each of them, say $\mathbf{I}$, the lifting $s(\mathbf{I})$ or the extension $a(\mathbf{I})$. The frame $s(\mathbf{I})$ is different from  $a(\mathbf{I})$ if $n>1$. It follows that the number of frames with length  $2n$ is twice the number of the the frames of length $2(n-1)$.
\end{proof}
\m
Natural generalizations of the extension $a$ and lifting  $s$ operations  can be defined on any sequence of integers eventually zero. The operation $a$ is a bijection on the set of such sequences. Its inverse, denoted by $b$, is defined by:
 $$
b(i_0,i_1,i_2,\ldots)=(i_0-1,i_1-1,i_2,\ldots).
 $$
The operation $s$ is injective and can be inverted on its image, via an operation $r$ defined by:
$$
 r(2,i_{1},i_{2},\ldots)=(i_{1},i_{2},\ldots).
 $$

It is easy to see if a given sequence, which is eventually zero,  is admissible. Indeed, we can, starting from it, trace back the elementary steps that generated it from the basic frame  $(1,0,0,\ldots)$.
The rules can be summed up as follows: every time there is a 2 we erase it by using  $r$, otherwise we subtract 1 from the first two elements, by applying $b$.

\begin{ex}\label{esempio1}
Consider the sequence $(3,6,6,3,1,0,\ldots)$. We wish to see if it is admissible for a path of length  $18=3+6+6+3+1-1$. Tracing backwards we have : $(2,5,6,3,1,0,\ldots)$, $(5,6,3,1,0,\ldots)$, $(4,5,3,1,0,\ldots)$, $(3,4,3,1,0,\ldots)$, $(2,3,3,1,0,\ldots)$, $(3,3,1,0,\ldots)$, $(2,2,1,0,\ldots)$, $(2,1,0,\ldots)$, and  $(1,0,\ldots)$. So the given sequence is admissible.
\end{ex}

\begin{ex}\label{esempio2}
If we take  $(4,5,2,3,1,0,\ldots)$, and we  trace backwards, we get: $(3,\!4,\!2,\!3,\!1,\!0,\ldots)$, $(2,\!3,\!2,\!3,\!1,\!0,\ldots)$, $(3,\!2,\!3,\!1,\!0,\ldots)$, $(2,\!1,\!3,\!1,\!0,\ldots)$, $(1,\!3,\!1,\!0,\ldots)$, and finally $(0,\!2,\!1,\!0,\ldots)$, which is clearly not admissible since there are no Dyck paths of length 2 without feet at the zero level.
\end{ex}
In the preceding arguments, we have implicitly used the following two lemmas whose proof is straightforward:

\begin{lem}\label{one}
  Given an eventually zero integer sequence  $\mathbf{I}$, with $i_0=2$,  $r(\mathbf{I})$ is  admissible if and only if  $\mathbf{I}$ is.
\end{lem}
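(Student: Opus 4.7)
The plan is to show that the two directions of the equivalence are realized directly by the lifting operation on paths and its inverse, i.e., that the frame-level maps $s$ and $r$ are the frame-shadows of the path-level operations of putting (or removing) an outer arch. Since $r$ is defined only when $i_0=2$, the hypothesis exactly matches the condition under which a Dyck path admits an ``outer arch'' decomposition, which is the geometric fact I would exploit.

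For the easy direction, assume $r(\mathbf{I})=(i_1,i_2,\ldots)$ is admissible and let $D'$ be any Dyck path realizing it. Lift $D'$ to $D:=UD'D$; this is a Dyck path, every foot of $D'$ at level $k$ becomes a foot of $D$ at level $k+1$, and $D$ acquires exactly two new feet at level $0$ (its endpoints). Hence the frame of $D$ is $(2,i_1,i_2,\ldots)=\mathbf{I}$, so $\mathbf{I}$ is admissible. This simply records the effect of the lifting operation on frames already stated earlier.

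For the nontrivial direction, assume $\mathbf{I}=(2,i_1,i_2,\ldots)$ is admissible and pick a Dyck path $D$ with that frame. The two endpoints of $D$ are always feet at level $0$, and by hypothesis $D$ has only two feet at level $0$ in total; consequently $D$ does not revisit the $x$-axis strictly between its endpoints. Thus $D$ must begin with $U$ and end with $D$, and the portion strictly between these two steps stays at height $\geq 1$. Shifting that middle portion down by one yields a Dyck path $D'$, and by inspecting heights level by level the frame of $D'$ is precisely $(i_1,i_2,\ldots)=r(\mathbf{I})$, proving admissibility of $r(\mathbf{I})$.

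The only step where one must be a little careful is the arch-decomposition in the nontrivial direction: one needs that exactly two feet at level $0$ forces $D$ to be a single primitive arch, not two arches glued together. But this is immediate from the definitions, because any intermediate touch of the $x$-axis would contribute an additional foot and violate $i_0=2$. Everything else is a routine check of how frame entries change under the lifting, which is what makes the lemma, as the paper notes, ``straightforward''.
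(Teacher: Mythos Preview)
Your proof is correct and matches the paper's intended argument: the paper declares the lemma ``straightforward'' without giving details, and the natural justification is exactly the one you wrote, namely that the path-level lifting $s$ and its inverse realize the frame-level maps $s$ and $r$, so admissibility transfers in both directions.
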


\begin{lem}\label{two}

    Given an eventually zero integer sequence  $\mathbf{I}$, with  $i_0\ne2$, $b(\mathbf{I})$  is  admissible if and only if  $\mathbf{I}$ is.
\end{lem}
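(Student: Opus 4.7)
The plan is to prove both directions of the biconditional using the correspondence between the frame operation $a$ (the inverse of $b$) and the path operation ``glue a trailing $UD$.'' The key algebraic fact is that if $P'$ is any Dyck path with frame $\mathbf{J}$, then the gluing formula gives
\[
\text{frame}(P' \wedge UD) \;=\; \mathbf{J}\wedge(2,1,0,\ldots)\;=\;(j_0+1,\, j_1+1,\, j_2,\ldots)\;=\;a(\mathbf{J}),
\]
so removing a trailing $UD$ corresponds exactly to applying $b$ to the frame.

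For the implication ``$b(\mathbf{I})$ admissible $\Longrightarrow$ $\mathbf{I}$ admissible,'' I would take any Dyck path $P'$ with frame $b(\mathbf{I})$ and set $P := P'\wedge UD$. By the displayed identity, $P$ has frame $a(b(\mathbf{I}))=\mathbf{I}$, so $\mathbf{I}$ is admissible. Note this direction does not require the hypothesis $i_0\neq 2$.

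For the converse, assume $\mathbf{I}$ is admissible with $i_0 \neq 2$. I would first dispose of the degenerate small cases: no admissible frame has $i_0 = 0$, and the only admissible frame with $i_0 = 1$ is the null frame $(1,0,\ldots)$, which is the halting state of the backward-tracing algorithm and to which $b$ is not applied. Hence the substantive case is $i_0 \geq 3$. Let $P$ be a Dyck path with frame $\mathbf{I}$; since $P$ has at least three feet at level $0$, it is not a single lifting, and its associated sequence must contain at least one internal subsequence $101$, i.e.\ an internal $DU$ pair. Invoking the rewriting argument already used in the proof of Theorem \ref{costruzione}, I replace internal $DU$ pairs with a $UD$ appended at the end, one at a time; each such local move preserves the frame. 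Iterating produces a Dyck path $\widetilde P$ with $\text{frame}(\widetilde P)=\mathbf{I}$ that ends in $UD$. Then $\widetilde P = P'\wedge UD$ for a Dyck path $P'$, and by the boxed identity $\text{frame}(P') = b(\mathbf{I})$, so $b(\mathbf{I})$ is admissible.

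The main obstacle is the inductive rewriting inside the converse, but this is precisely the move used in Theorem \ref{costruzione}, so no new combinatorial work is needed; one only has to verify that each individual $DU\leftrightarrow UD$ swap preserves the multiset of $y$-coordinates visited by the path (which is clear, since the swap affects only the height of one interior vertex, and the frame records a count that is easily checked to be invariant under the move). Once that is in hand, the lemma is simply a translation of Theorem \ref{costruzione}'s procedure from paths to frames.
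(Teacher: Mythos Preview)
Your argument is correct and is exactly the route the paper has in mind: the authors declare the lemma ``straightforward'' and omit the proof, but since it appears immediately after Theorem~\ref{costruzione}, reducing the converse direction to that theorem's frame-preserving rewrite is clearly the intended idea, and your forward direction via $P'\wedge UD$ is the obvious one.

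One small wording slip to fix: your parenthetical says the move ``affects only the height of one interior vertex,'' which is the description of an \emph{in-place} $DU\to UD$ swap---and that operation does \emph{not} preserve the frame (it turns a local height profile $h,h{-}1,h$ into $h,h{+}1,h$). The move actually used in Theorem~\ref{costruzione}, and the one you invoke in the body of your argument, deletes an internal $DU$ sitting at a $101$ valley and appends $UD$ at the end; the correct bookkeeping is that the deletion removes one foot at level~$0$ and one at level~$1$, and the append restores exactly one of each. Replace the parenthetical with that count and the proof is clean. Your handling of the degenerate case $i_0=1$ (the null frame, where the biconditional literally fails and the trace-back simply halts) is also the right reading of the lemma in context.
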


Given an eventually zero integer sequence $(i_0,i_1,i_2,\ldots)$, denote by $i_f$ the last nonzero element and call  $f$ the {\it degree} of the frame.

\m\noi
\begin{thm}\label{CNES}
 An eventually zero sequence of nonnegative integers  $$\mathbf{I}=(i_0,i_1,...,i_f,0,...),$$  with length  $2n$, is admissible if and only if the following conditions are satisfied:
  \begin{itemize}

    \item $i_0=1$, if $f=0$ and $i_0\geq 2$ if $f>0$;
    \item $i_1-i_0\geq 0$ provided $f>1$;
    \item $i_2-i_1+i_0\geq 2$;
    \item $i_3-i_2+i_1-i_0\geq 0$;
    \item $i_4-i_3+i_2-i_1+i_0\geq 2$;
    \item $\cdots\cdots$
    \item  $i_f-i_{f-1}+i_{f-2}+\cdots = -1$ when $f$ is odd;
    \item  $i_f-i_{f-1}+i_{f-2}+\cdots = 1$ when $f$ is even.
  \end{itemize}
\end{thm}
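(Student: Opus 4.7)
The plan is to argue by induction on the length $2n$ of $\mathbf{I}$, using Lemmas~\ref{one} and~\ref{two} to reduce admissibility of $\mathbf{I}$ to that of a sequence of length $2(n-1)$. The base case $n=0$ is the unique sequence $(1,0,\ldots)$, which is admissible (the null path) and trivially satisfies the listed conditions (only the $f=0$ clause of the first condition applies). For the inductive step, the first listed condition forces either $\mathbf{I}=(1,0,\ldots)$ or $i_0\ge 2$; when $\mathbf{I}$ has length $\ge 2$, I split into the cases $i_0=2$ (apply $r$ and Lemma~\ref{one}) and $i_0\ge 3$ (apply $b$ and Lemma~\ref{two}). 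Thus it suffices to show that the listed condition system on $\mathbf{I}$ is equivalent to the analogous system on $r(\mathbf{I})$ respectively $b(\mathbf{I})$; the biconditional then propagates down the induction.

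The verification for $b$ is the easier half. The sequence $b(\mathbf{I})=(i_0-1,i_1-1,i_2,\ldots)$ has the same degree $f$ as $\mathbf{I}$, and in every alternating sum appearing from the third listed condition onward, the entries $i_0$ and $i_1$ appear with opposite signs, so the two decrements cancel and the inequality is unchanged. The first condition for $b(\mathbf{I})$, namely $i_0-1\ge 2$, is equivalent to $i_0\ge 3$, which is precisely our hypothesis to invoke $b$; the second condition $(i_1-1)-(i_0-1)\ge 0$ is identical to $i_1-i_0\ge 0$. For $r$, with $r(\mathbf{I})=(i_1,i_2,\ldots)$ of degree $f-1$, I would match each listed condition at position $k-1$ of $r(\mathbf{I})$ with the listed condition at position $k$ of $\mathbf{I}$ after substituting $i_0=2$: the $+i_0=+2$ inside the alternating sum absorbs the constant on the right, so a ``$\ge 2$'' inequality for $\mathbf{I}$ becomes a ``$\ge 0$'' inequality for $r(\mathbf{I})$ and vice versa. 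The final equality $i_f-i_{f-1}+\cdots=\pm 1$ for $\mathbf{I}$ translates, after subtracting the omitted term $(-1)^f\cdot 2$, into the corresponding equality of opposite sign for $r(\mathbf{I})$, matching the parity flip $f\mapsto f-1$.

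The main obstacle is the careful bookkeeping of signs, parities, and index shifts in the $r$-case: one must correctly pair each ``$\ge 2$'' inequality of $\mathbf{I}$ with a ``$\ge 0$'' inequality of $r(\mathbf{I})$, and match the two parity cases of the final equality under $f\mapsto f-1$. Nothing is conceptually subtle, but getting the indices and signs exactly aligned position by position is the only real work. Once the equivalence of the condition systems under $r$ and $b$ is established, both implications of the theorem follow from the inductive hypothesis applied to the length-$2(n-1)$ sequence, together with Lemmas~\ref{one} and~\ref{two}.
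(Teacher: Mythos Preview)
Your proposal is correct and follows essentially the same route as the paper's own proof: induction on the length $2n$, splitting into the cases $i_0=2$ (apply $r$ and Lemma~\ref{one}) and $i_0\ge 3$ (apply $b$ and Lemma~\ref{two}), and verifying that the system of alternating-sum conditions transforms correctly under $r$ and $b$. The paper carries out the same computations you sketch, with the only cosmetic difference that it treats the degrees $f=0$ and $f=1$ as separate base cases before the general induction, whereas you fold them into the same scheme.
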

\s\noi
\begin{proof}   The last two conditions can be summarized in the following:
$i_0-i_1+i_2-...+(-1)^fi_f=1$. Notice also that if $f=0$, then the only required condition is  $i_0=1$, while, in case $f=1$, the conditions are  $i_0\geq 2$ and $i_0-i_1=1$.

If $f=0$, $\mathbf{I}=(i_0,0,...)$. Such a sequence is  admissible if and only if it is the frame of the null path, namely, if and only if   $i_0=1$.

Suppose  $f=1$, and use induction on the half-length  $n$. When $n=1$ the only admissible sequence is  $(2,1,0...)$, which satisfies the conditions $i_0\geq 2$ and $i_0-i_1=1$. Assuming the statement true up to the half-length $(n-1)$, we are going to show it for the half-length  $n$. Let $\mathbf{I} =(i_0,i_1,0,...)$ with $i_0+i_1=2n+1$. If $i_0=2$, consider $r(\mathbf{I})=(i_1,0,...)$. This is  admissible if and only if $i_1=1$, and therefore, by Lemma \ref{one}, the result follows. If instead  $i_0\ne 2$, consider $b(\mathbf{I})=(i_0-1,i_1-1,0,...) = (i'_0,i'_1,0,...)$. This, being of length $2(n-1)$, by the induction hypothesis it is admissible if and only if  $i'_0\ge 2$ and $i'_0-i'_1=1$, which are equivalent to $i_0\ge 3$ and $i_0-i_1=1$, hence, again,  Lemma \ref{two} implies the result.

Thus, for degrees  0 and 1 the theorem is proved. Assume  $f\ge 2$. The result is obviously true for length $2n=0$. Assume the result is true up to length $2(n-1)$; we shall prove it for  length $2n$.  We distinguish two cases.

Case A: $i_0=2$. Let $\mathbf{I}=(2,i_1,i_2,...,i_f,0,...)$, with $\d\sum_{k=0}^{\infty}i_k=2n+1$. By applying  $r$, we obtain  $r(\mathbf{I})=(i_{1},i_{2},...,i_f,0,...)= (i_0',i_1',...,i'_{f'},0,...)$. We have:
\begin{equation}
  \begin{split}
    i_0'&=i_1;\\
i'_1-i'_0 &= i_2-i_1=  i_2-i_1+i_0-2;\\
i'_2-i'_1+i'_0 &= i_3-i_2+i_1=i_3-i_2+i_1-i_0+2;\\
i'_3-i'_2+i'_1-i'_0 & = \!i_4\!-\!i_3\!+\!i_2\!-\!i_1\! =  i_4-i_3+i_2-i_1+i_0-2;\\
\cdots\\
i'_0-i'_1+i'_2-...+(-1)^{f'} i'_{f'}&=i_1-i_2+...+(-1)^{f -1}i_f\\
& = i_1-i_2+...+(-1)^{f -1}i_f -i_0+2 \\
&=-(i_0-i_1+i_2-...+(-1)^fi_f)+2.
  \end{split}
\end{equation}
The conditions
\begin{equation}
  \begin{split}
i_0'& \ge 2\\
i'_1-i'_0& \ge0\\
i'_2-i'_1+i'_0&\ge2 \\
i'_3-i'_2+i'_1-i'_0 &\ge0\\
\cdots\\
i'_0-i'_1+i'_2-...+(-1)^{f'} i'_{f'}&=1
 \end{split}
\end{equation}
together with  $i_0=2$ are equivalent to
\begin{equation}
  \begin{split}
i_0&=2\\
i_1-i_0&\ge0\\
i_2-i_1+i_0&\ge2\\
i_3-i_2+i_1-i_0&\ge 0\\
 i_4-i_3+i_2-i_1+i_0&\ge 2\\
\cdots \\
i_0-i_1+i_2-...+(-1)^fi_f&=2\!-\!(i'_0\!-\!i'_1\!+\!i'_2\!-...+\!(-1)^{f'}\! i'_{f'})=\!2\!-\!1=1
\end{split}
\end{equation}
Since the result is true, by the inductive hypothesis, for  $r(\mathbf{I})$, we conclude using  Lemma \ref{one}.
Case B: $i_0\ne 2$.
Let $\mathbf{I}=(i_0,i_1,i_2,...,i_f,0,...)$, with $\d\sum_{k=0}^{\infty}i_k=2n+1$ and $i_0\ne 2$. By applying the operation $b$, we obtain $b(\mathbf{I})=(i_0-1,i_{1}-1,i_{2},...,i_f,0,...)= (i_0',i_1',...,i'_{f'},0,...)$. We have:
\begin{equation}
  \begin{split}
i_0'&=i_0-1\\
i'_1-i'_0 & = i_1-1-i_0+1=  i_1-i_0\\
i'_2-i'_1+i'_0 &= i_2-i_1+1+i_0-1=i_2-i_1+i_0\\
i'_3-i'_2+i'_1-i'_0 &=i_3\!-\!i_2\!+\!i_1\!-\!1\!-\!i_0\!+\!1 =i_3-i_2+i_1-i_0\\
\cdots \\
i'_0-i'_1+i'_2-...+(-1)^{f'} i'_{f'}&=i_0-1-i_1+1+i_2-...+(-1)^fi_f\\
& = i_0-i_1+i_2-...+(-1)^fi_f.
\end{split}
\end{equation}
\noi
Again, in this case the conditions
\begin{equation}
  \begin{split}
i_0'&\ge 2 \\
i'_1-i'_0&\ge0\\
i'_2-i'_1+i'_0&\ge2\\
i'_3-i'_2+i'_1-i'_0 &\ge0\\
\cdots \\
i'_0-i'_1+i'_2-...+(-1)^{f'} i'_{f'}&=1
\end{split}
\end{equation}
\s\noi
are equivalent to
\begin{equation}
  \begin{split}
i_0&>2\\
i_1-i_0& \ge0\\
i_2-i_1+i_0&\ge2\\
i_3-i_2+i_1-i_0&\ge0\\
i_4-i_3+i_2-i_1+i_0&\ge2\\
\cdots\\
i_0-i_1+i_2-...+(-1)^fi_f&=1
\end{split}
\end{equation}
\s
Because of induction, the result is true for $b(\mathbf{I})$, and  Lemma \ref{two}
allows us to conclude.
\end{proof}
\m
The following are immediate consequences of the previous theorem.

\m\noi
\begin{thm}\label{immediate}
 If a frame  $\mathbf{I}=(i_0,i_1,i_2,\ldots,i_f,\ldots)$ with length $2n$ is admissible, then:

\begin{enumerate}
\item
$i_{f-1}> i_f$;
\item
 $i_0= i_1+1\Longleftrightarrow i_1=i_f$;
\item
$2\le i_j\le i_{j-1}+i_{j+1}-2$\quad $(0<j<f-1)$

\end{enumerate}
\end{thm}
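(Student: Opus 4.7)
The plan is to repackage Theorem~\ref{CNES} in terms of the partial alternating sums
\[
B_k \;:=\; i_k - i_{k-1} + i_{k-2} - \cdots + (-1)^k i_0, \qquad 0 \le k \le f,
\]
so that the hypothesis reads: $B_k \ge 2$ for $0 \le k < f$ with $k$ even, $B_k \ge 0$ for $0 < k < f$ with $k$ odd, and $B_f = (-1)^f$. Two elementary identities then do most of the work: the telescoping relation $i_k = B_k + B_{k-1}$ for $k \ge 1$, and its consequence $i_{k-1} + i_{k+1} - i_k = B_{k-2} + B_{k+1}$ for $k \ge 2$ (with the boundary version $i_0 + i_2 - i_1 = B_2$ when $k = 1$). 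Each of the three claims then reduces to checking that a sum or difference of two $B$'s has the right sign.

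\textbf{Part (1).} Substitution gives $i_{f-1} - i_f = B_{f-2} - B_f$. The indices $f-2$ and $f$ have the same parity, so Theorem~\ref{CNES} provides $B_{f-2} \ge 2$ when $f$ is even (while $B_f = 1$) and $B_{f-2} \ge 0$ when $f$ is odd (while $B_f = -1$); in both cases the difference is at least $1$. The boundary cases $f = 1, 2$ are handled directly from the equalities $i_0 - i_1 = 1$ and $i_1 - i_2 = i_0 - 1 \ge 1$.

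\textbf{Part (3).} For the lower bound, $i_j = B_j + B_{j-1}$, where for $0 < j < f-1$ the indices $j$ and $j-1$ have opposite parity and both lie strictly below $f$. The even-indexed one contributes $\ge 2$ (using $B_0 = i_0 \ge 2$ at the endpoint $j = 1$) and the odd-indexed one contributes $\ge 0$, giving $i_j \ge 2$. For the upper bound, the identity $i_{j-1} + i_{j+1} - i_j = B_{j-2} + B_{j+1}$ reduces the claim to $B_{j-2} + B_{j+1} \ge 2$; the indices $j-2$ and $j+1$ again have opposite parity and both lie in $[0, f-1]$, so the same even/odd argument applies, with the boundary case $j = 1$ handled by $B_2 \ge 2$.

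\textbf{Part (2).} The equation $i_0 = i_1 + 1$ is precisely $B_1 = -1$. Since Theorem~\ref{CNES} forces $B_1 \ge 0$ whenever $f \ge 2$ and forces $B_1 = -1$ exactly when $f = 1$, we see that $i_0 = i_1 + 1$ is equivalent, for admissible frames of positive length, to $f = 1$, and the conclusion $i_1 = i_f$ is then automatic. The main obstacle here is interpretive: the literal reading of ``$i_1 = i_f$'' as an equality of integers is satisfied by some admissible frames with $f > 1$ as well (for instance $(2,2,3,2,0,\ldots)$, the frame of $UUUDUDDD$, where $i_1 = i_3 = 2$ but $i_0 = 2 \ne 3$), so the converse fails under that reading. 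I would therefore prove the equivalence under the reading ``$i_1 = i_f$'' as shorthand for $f = 1$, and flag the ambiguity for the reader; with that convention the statement follows in one line from the equality condition of Theorem~\ref{CNES}.
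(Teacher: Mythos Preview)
The paper does not actually supply a proof here; it merely declares the three items to be ``immediate consequences'' of Theorem~\ref{CNES}. Your derivation via the partial alternating sums $B_k$ is precisely the unpacking the paper leaves implicit, and your arguments for parts (1) and (3) are correct and well organized: the telescoping identities $i_k=B_k+B_{k-1}$ and $i_{k-1}+i_{k+1}-i_k=B_{k-2}+B_{k+1}$ (with the boundary variant $i_0+i_2-i_1=B_2$) are exactly the right device, and the parity bookkeeping is accurate.

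Your handling of part (2) is also correct, and in fact you have caught a genuine issue in the paper's statement rather than introduced one. The counterexample $(2,2,3,2,0,\ldots)$, the frame of $UUUDUDDD$, is admissible with $f=3$ and satisfies $i_1=i_f=2$ as integers while $i_0=2\ne i_1+1$; so the converse direction of (2) fails under the literal reading. Since Theorem~\ref{CNES} forces $i_1-i_0\ge 0$ whenever $f>1$ and forces $i_0-i_1=1$ exactly when $f=1$, your reformulation ``$i_0=i_1+1 \Longleftrightarrow f=1$'' is the statement that actually follows from Theorem~\ref{CNES}, and flagging the ambiguity is the right call.
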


\section{Cardinality of a frame}

Recall the two operations defined on the set of Dyck paths:
\begin{itemize}
\item the lifting of a path  $\mathbf u$ denoted by $s(\mathbf u)$;
\item the gluing of two paths $\mathbf u$, $\mathbf v$, denote by $\mathbf u\wedge \mathbf v$.
\end{itemize}

With these two operations one may construct any Dyck path starting from shorter Dyck paths.

Therefore, every Dyck path may be expressed as
$$s^{j_1}(\mathbf x_1)\wedge s^{j_2}(\mathbf x_2)\wedge\ldots \wedge s^{j_t}(\mathbf x_t)$$
where each  $\mathbf x_i$ is again expressible in the same way, with the condition that after a finite number of steps one arrives at expressions of the form $$s^{k_1}(\mathbf 0)\wedge s^{k_2}(\mathbf 0)\wedge\ldots \wedge s^{k_r}(\mathbf 0).$$

For example, let's consider the path  $$UUDUUDDDUDUUUDUDDDUUDD$$ with length 22:

\begin{tikzpicture}[scale=0.4]
\tikzstyle{every node}=[draw,circle,fill=black,minimum size=2pt,inner sep=0pt,radius=1pt]
      \node (A) at (0,0) {};
       \node (B) at (1,1)  {} ;
      \node (C) at (2,2) {};
      \node (D) at (3,1) {};
      \node (E) at (4,2) {};
      \node (F) at (5,3) {};
      \node (G) at (6,2) {};
      \node (H) at (7,1) {};
      \node (I) at (8,0) {};
      \node (J) at (9,1) {};
      \node (K) at (10,0) {};
      \node (L) at (11,1) {};
      \node (M) at (12,2) {};
      \node (N) at (13,3) {};
      \node (O) at (14,2) {};
      \node (P) at (15,3) {};
      \node (Q) at (16,2) {};
      \node (R) at (17,1) {};
      \node (S) at (18,0) {};
      \node (T) at (19,1) {};
      \node (U) at (20,2) {};
      \node (V) at (21,1) {};
      \node (W) at (22,0) {};
       \draw (A)--(B);
      \draw (B)--(C);
      \draw (C)--(D);
      \draw (D)--(E);
      \draw (E)--(F);
      \draw (F)--(G);
      \draw (G)--(H);
      \draw (H)--(I);
      \draw (I)--(J);
      \draw (J)--(K);
      \draw (K)--(L);
      \draw (L)--(M);
      \draw (M)--(N);
      \draw (N)--(O);
      \draw (O)--(P);
      \draw (P)--(Q);
      \draw (Q)--(R);
      \draw (R)--(S);
      \draw (S)--(T);
      \draw (T)--(U);
      \draw (U)--(V);
      \draw (V)--(W);
       \end{tikzpicture}
\vspace{.3cm}

The corresponding sequence is $$01212321010123232101210$$ and it may be expressed as
$$s(\mathbf x_1)\wedge s(\mathbf x_2)\wedge s(\mathbf x_3)\wedge s(\mathbf x_4),$$
where $s(\mathbf x_1)=012123210$, $s(\mathbf x_2)=010$, $s(\mathbf x_3)=012323210$, $s(\mathbf x_4)=01210$, with $\mathbf x_1=0101210$, $\mathbf x_2=\mathbf 0$, $\mathbf x_3=0121210$, $\mathbf x_4=010$.
One  has therefore:

\noi
$\mathbf x_1=s(\mathbf y_1)\wedge s(\mathbf y_2)$, $\mathbf x_3=s(\mathbf y_3)$, $\mathbf x_4 =s(\mathbf 0)$,
with $\mathbf y_1=\mathbf 0$, $\mathbf y_2=010$, $\mathbf y_3=01010$ and so $\mathbf y_2=s(\mathbf 0)$, $\mathbf y_3=s(\mathbf z_1)\wedge s(\mathbf z_2)$,
with
$\mathbf z_1=\mathbf 0$, $\mathbf z_2=\mathbf 0$.

Finally, the assigned path can be expressed as
$$s(s(\mathbf 0)\wedge s^2(\mathbf 0))\wedge s(\mathbf 0)\wedge s^2(s(\mathbf 0)\wedge s(\mathbf 0))\wedge s^2(\mathbf 0).$$

Since a frame is an eventually zero integer sequence, we may think of it as a polynomial. The constant polynomial  1 corresponds to the null frame. Thus  $s(1)$ is the frame of the unique path with length 2, while  $s^2(1)$ and $s(1)\wedge s(1)$ are the frames of the length 4 paths, and so on. It is easy to check that, denoting by  $p(x)$ a frame, one has:
\begin{prop}\label{polinomi}
$s(p(x)) = 2+xp(x), \quad p(x)\wedge q(x) = p(x) + q(x) -1$.
\end{prop}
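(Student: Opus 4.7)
The plan is to unwind both identities directly from the polynomial/sequence dictionary, since a frame $\mathbf{I}=(i_0,i_1,i_2,\ldots)$ is eventually zero and therefore is faithfully identified with the polynomial $p(x)=\sum_{k\ge 0}i_k x^k$. With this identification fixed, both formulas become statements about shifting or adding coefficients.

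First I would handle the lifting formula. By the definition given just before the proposition, $s(\mathbf{I})=(2,i_0,i_1,i_2,\ldots)$: a $2$ is inserted in position $0$ and every previous entry is shifted one place to the right. Translating to polynomials, the new constant term is $2$ and the coefficient of $x^{k+1}$ is $i_k$ for every $k\ge 0$, so
\[
s(p(x))\;=\;2+\sum_{k\ge 0}i_k\,x^{k+1}\;=\;2+x\sum_{k\ge 0}i_k\,x^k\;=\;2+xp(x).
\]

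For the gluing formula I would apply the definition $(j_0,j_1,j_2,\ldots)\wedge (i_0,i_1,i_2,\ldots) = (i_0+j_0-1,i_1+j_1,i_2+j_2,\ldots)$. Writing $p(x)=\sum j_k x^k$ and $q(x)=\sum i_k x^k$, the resulting sequence has constant term $i_0+j_0-1$ and $k$-th coefficient $i_k+j_k$ for $k\ge 1$, so as polynomials
\[
p(x)\wedge q(x)\;=\;(j_0+i_0-1)+\sum_{k\ge 1}(j_k+i_k)x^k\;=\;p(x)+q(x)-1,
\]
which is the second identity. Both calculations are pure bookkeeping, so there is really no obstacle; the only point worth noting is that since the sequences are eventually zero, the associated series are genuine polynomials and all the manipulations are finite.
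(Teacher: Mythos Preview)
Your proof is correct and is exactly the verification the paper has in mind: the paper states the proposition as something ``easy to check'' from the identification of eventually zero sequences with polynomials and gives no further argument, so your direct unwinding of the definitions of $s$ and $\wedge$ on the coefficient sequences is precisely the intended justification.
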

The procedure to determine the admissibility of a sequence, see Examples \ref{esempio1}, \ref{esempio2}, can be used to determine a ``canonical'' representative of a frame.
\begin{ex}
Consider the frame $(3,4,3,1,0,\ldots)$ and apply to it  the functions $r$ and $b$. We obtain the sequence of frames  $(2,3,3,1,0,...)$, $(3,3,1,0,...)$, $(2,2,1,0,...)$, $(2,1,0,...)$, $(1,0,...)$, thus getting to the null frame. There is, of course, only one path corresponding to the null frame: the null path.

We may now trace this procedure backwards with the operations $s$ and $a$ on the paths, eventually getting a desired canonical representative of the  frame $(3,4,3,1,0,\ldots)$.

 We start form the null path
 \begin{tikzpicture}[scale=0.4]
\tikzstyle{every node}=[draw,circle,fill=black,minimum size=2pt,inner sep=0pt,radius=1pt]
      \node (A) at (0,0) {};
       \end{tikzpicture}
\vspace{.3cm}

this lifted gives:

\begin{tikzpicture}[scale=0.4]
\tikzstyle{every node}=[draw,circle,fill=black,minimum size=2pt,inner sep=0pt,radius=1pt]
      \node (A) at (0,0) {};
       \node (B) at (1,1)  {} ;
      \node (C) at (2,0) {};
       \draw (A)--(B);
      \draw (B)--(C);
       \end{tikzpicture}
\vspace{.3cm}
with frame $s(1)=(2,1,0,\ldots)$,

lifted gives:

\begin{tikzpicture}[scale=0.4]
\tikzstyle{every node}=[draw,circle,fill=black,minimum size=2pt,inner sep=0pt,radius=1pt]
      \node (A) at (0,0) {};
       \node (B) at (1,1)  {} ;
      \node (C) at (2,2) {};
      \node (D) at (3,1) {};
      \node (E) at (4,0) {};
       \draw (A)--(B);
      \draw (B)--(C);
      \draw (C)--(D);
      \draw (D)--(E);
       \end{tikzpicture}
\vspace{.3cm}
with frame $s^2(1)=(2,2,1,0,\ldots)$,

extended  gives:

\begin{tikzpicture}[scale=0.4]
\tikzstyle{every node}=[draw,circle,fill=black,minimum size=2pt,inner sep=0pt,radius=1pt]
      \node (A) at (0,0) {};
       \node (B) at (1,1)  {} ;
      \node (C) at (2,2) {};
      \node (D) at (3,1) {};
      \node (E) at (4,0) {};
      \node (F) at (5,1) {};
      \node (G) at (6,0) {};
       \draw (A)--(B);
      \draw (B)--(C);
      \draw (C)--(D);
      \draw (D)--(E);
      \draw (E)--(F);
      \draw (F)--(G);
       \end{tikzpicture}
\vspace{.3cm}
with frame
$s^2(1)\wedge s(1)=(3,3,1,0,\ldots)$,

lifted gives:

\begin{tikzpicture}[scale=0.4]
\tikzstyle{every node}=[draw,circle,fill=black,minimum size=2pt,inner sep=0pt,radius=1pt]
      \node (A) at (0,0) {};
       \node (B) at (1,1)  {} ;
      \node (C) at (2,2) {};
      \node (D) at (3,3) {};
      \node (E) at (4,2) {};
      \node (F) at (5,1) {};
      \node (G) at (6,2) {};
      \node (H) at (7,1) {};
      \node (I) at (8,0) {};
       \draw (A)--(B);
      \draw (B)--(C);
      \draw (C)--(D);
      \draw (D)--(E);
      \draw (E)--(F);
      \draw (F)--(G);
      \draw (G)--(H);
      \draw (H)--(I);
       \end{tikzpicture}
\vspace{.3cm}
with frame
$s(s^2(1)\wedge s(1))=(2,3,3,1,0,\ldots)$,

extended  gives:

\begin{tikzpicture}[scale=0.4]
\tikzstyle{every node}=[draw,circle,fill=black,minimum size=2pt,inner sep=0pt,radius=1pt]
      \node (A) at (0,0) {};
       \node (B) at (1,1)  {} ;
      \node (C) at (2,2) {};
      \node (D) at (3,3) {};
      \node (E) at (4,2) {};
      \node (F) at (5,1) {};
      \node (G) at (6,2) {};
      \node (H) at (7,1) {};
      \node (I) at (8,0) {};
      \node (J) at (9,1) {};
      \node (K) at (10,0) {};
       \draw (A)--(B);
      \draw (B)--(C);
      \draw (C)--(D);
      \draw (D)--(E);
      \draw (E)--(F);
      \draw (F)--(G);
      \draw (G)--(H);
      \draw (H)--(I);
      \draw (I)--(J);
      \draw (J)--(K);
       \end{tikzpicture}
\vspace{.3cm}

with frame
$s(s^2(1)\wedge s(1)) \wedge s(1)=(3,4,3,1,0,\ldots)$
\end{ex}

Applying this procedure to any frame $(i_0, i_1,...,i_f,0,...)$, we reach a particular path belonging to this frame. This path is called the  {\it  canonical representative } of the frame $(i_0, i_1,...,i_f,0,...)$. In the preceding example the canonical representative is therefore  $s(s^2(\mathbf 0)\wedge s(\mathbf 0)) \wedge s(\mathbf 0)$.

The procedure is such that any time in a canonical representative there is a product of the form  $s^{j_1}(\mathbf x_1) \wedge s^{j_2}(\mathbf x_2) \wedge \ldots\wedge s^{j_t}(\mathbf x_t)$, then $s^{j_2}(\mathbf x_2)=\cdots=s^{j_t}(\mathbf x_t) =s(\mathbf 0)$.

In other words, a canonical path is of the form
$$s^{j_1}\!(s^{j_2}(...(\!s^{j_{t-1}}(\!s^{j_t}(\mathbf 0)\wedge \underbrace{s(\mathbf 0)\!\wedge...\wedge s(\mathbf 0)}_{k_t})\wedge \underbrace{s(\mathbf 0)\wedge...\wedge s(\mathbf 0)}_{k_{t-1}})...)\wedge \underbrace{s(\mathbf 0)\wedge...\wedge s(\mathbf 0)}_{k_1}.$$

Another way to describe the canonical path is to observe that in the corresponding sequence  of  U and D, after any sequence of one or more  D there is at most one  U.

\begin{ex} Assume  the frame $(3,6,6,3,1,0,...)$ is given.
By the preceding remarks to reach level 4 one must necessarily begin with a sequence of 4  U. Since there is a single foot at level 4 we must go down with at least  2 D. If we go down with  3 D, we could not go back to level  3, where there should be  3 feet. Hence we go down exactly two steps D and go back up with one  U. So far the sequence is  UUUUDDU. Since we do not need to go back to level 3 any longer, we go down with at least 2 more  D's. Again in this case, considering that at level 2 we have 6 feet, we must go down exactly with  2 D, then go up with one  U, then go down with one  D and climb back up with one  U two more times. We thus obtain  UUUUDDUDDUDUDU so far. Having exhausted the level 2 feet, we must go down with  2 D to level 0. Having obtained so far only 5 feet at level 1, we must still go up with  U and the concluding with a  D. The sequence is therefore UUUUDDUDDUDUDUDDUD.
\end{ex}

The following  property is often  useful in computations.
\m\noi
\begin{prop}\label{identita}
 $s(p(x)\wedge q(x))\wedge s(1) = s(p(x))\wedge s(q(x))$.
\end{prop}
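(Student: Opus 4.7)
The plan is to verify the identity directly by translating both sides into polynomial expressions using Proposition \ref{polinomi}, which encodes $s$ and $\wedge$ as simple polynomial operations:
\[
s(p(x)) = 2 + xp(x), \qquad p(x)\wedge q(x) = p(x)+q(x)-1.
\]
Once both sides are unfolded, the claim reduces to an equality between two explicit polynomials in $x$, $p(x)$, $q(x)$.

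First, I would compute the left-hand side. Using $p(x)\wedge q(x) = p(x)+q(x)-1$, apply $s$ to obtain
\[
s\bigl(p(x)\wedge q(x)\bigr) = 2 + x\bigl(p(x)+q(x)-1\bigr) = 2 - x + xp(x) + xq(x).
\]
Then glue with $s(1) = 2+x$, using $u\wedge v = u+v-1$, to get
\[
s\bigl(p(x)\wedge q(x)\bigr)\wedge s(1) = \bigl(2-x+xp(x)+xq(x)\bigr) + (2+x) - 1 = 3 + xp(x) + xq(x).
\]

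Next I would compute the right-hand side. Since $s(p(x)) = 2+xp(x)$ and $s(q(x)) = 2+xq(x)$,
\[
s(p(x))\wedge s(q(x)) = \bigl(2+xp(x)\bigr) + \bigl(2+xq(x)\bigr) - 1 = 3 + xp(x) + xq(x).
\]
The two polynomials coincide, which proves the identity. There is no real obstacle here: the entire proof is a two-line algebraic verification, and the content of the statement lies in the preceding Proposition \ref{polinomi} that legitimises the polynomial encoding of $s$ and $\wedge$. One might additionally remark that the identity admits the combinatorial reading that lifting a glued frame and appending one extra base block produces the same frame as first lifting the two pieces separately and then gluing, which explains why the $-1$ contributed by $s(1)$ precisely cancels the $-x$ created by the inner gluing under $s$.
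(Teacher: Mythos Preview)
Your proof is correct and follows essentially the same approach as the paper: both unfold the two sides using Proposition~\ref{polinomi} and verify the resulting polynomial identity directly, obtaining $3 + xp(x) + xq(x)$ on each side.
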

\s\noi
\begin{proof} Use  Proposition \ref{polinomi}: $s(p(x)\wedge q(x))\wedge s(1) = s(p(x)+q(x)-1)\wedge (2+x) = (2+x(p(x)+q(x)-1)) \wedge (2+x) = 2+x(p(x)+q(x)-1) +(2+x) -1 = 2+xp(x)+2+xq(x)-1 = s(p(x))\wedge s(q(x))$.
\end{proof}
\m
From this, it immediately follows:

\m\noi
\begin{cor}\label{stessoschema} Given two Dyck paths $\mathbf x_1$, $\mathbf x_2$, the two paths $s(\mathbf x_1)\wedge s(\mathbf x_2)$, $s(\mathbf x_1\wedge \mathbf x_2)\wedge s(\mathbf 0)$ have the same frame.
\end{cor}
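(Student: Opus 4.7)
The plan is to read off the corollary directly from Proposition \ref{identita}, using the polynomial dictionary between paths and their frames that was set up just before Proposition \ref{polinomi}. The key observation is that the lifting and gluing operations on Dyck paths descend to operations on frames, and under the polynomial encoding the null frame corresponds to the constant polynomial $1$.

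First I would let $p(x)$ denote the frame of $\mathbf{x}_1$ and $q(x)$ the frame of $\mathbf{x}_2$, both viewed as polynomials. Because lifting and gluing of paths produce lifting and gluing of their frames (this is the observation made explicitly in the paragraph introducing the lifting/gluing of frames), the path $s(\mathbf{x}_1)\wedge s(\mathbf{x}_2)$ has frame $s(p(x))\wedge s(q(x))$. Similarly, since the null path $\mathbf{0}$ has frame equal to the constant polynomial $1$, the path $s(\mathbf{x}_1\wedge \mathbf{x}_2)\wedge s(\mathbf{0})$ has frame $s(p(x)\wedge q(x))\wedge s(1)$.

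The conclusion is then immediate from Proposition \ref{identita}, which asserts precisely the identity
\begin{equation*}
s(p(x)\wedge q(x))\wedge s(1) = s(p(x))\wedge s(q(x))
\end{equation*}
in the polynomial ring. Hence the two paths $s(\mathbf{x}_1)\wedge s(\mathbf{x}_2)$ and $s(\mathbf{x}_1\wedge \mathbf{x}_2)\wedge s(\mathbf{0})$ have the same frame.

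There is essentially no obstacle here; the only point that requires a moment of care is the translation step, namely checking that the operations denoted by $s$ and $\wedge$ on paths agree with the identically named operations on the polynomial representatives of their frames. This compatibility is exactly the content of Proposition \ref{polinomi} together with the earlier paragraph defining lifting and gluing of frames, so the corollary reduces to a one-line application of Proposition \ref{identita}.
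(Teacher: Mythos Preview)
Your proof is correct and follows exactly the paper's approach: the corollary is stated as an immediate consequence of Proposition \ref{identita}, and your argument spells out the translation from paths to frames (via Proposition \ref{polinomi} and the earlier discussion) that makes this immediate.
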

\m\noi
\begin{thm}\label{canonical}
     It is possible to obtain  the canonical representative of a frame $(i_0,i_1,...,i_f,0,...)$ in a finite number of steps starting from any representative path $\mathbf x$ using the commutativity property of the gluing operation and  Corollary \ref{stessoschema}.
\end{thm}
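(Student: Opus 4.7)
The approach is induction on the length $2n$ of $\mathbf{x}$. The base cases $n = 0$ and $n = 1$ are trivial because each of the frames $(1,0,\ldots)$ and $(2,1,0,\ldots)$ has a unique representative, already canonical. For $n \ge 2$, I would begin by writing $\mathbf{x}$ as a gluing of its primitive arches at level $0$,
\begin{equation*}
\mathbf{x} \;=\; s(\mathbf{u}_1)\wedge s(\mathbf{u}_2)\wedge\cdots\wedge s(\mathbf{u}_t),
\end{equation*}
each primitive arch touching the $x$-axis only at its two endpoints.

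Then I would carry out three frame-preserving reductions. First, by the commutativity of $\wedge$ on frames, permute the factors so that every index $i$ with $\mathbf{u}_i = \mathbf{0}$ is moved to the right, leaving $\mathbf{u}_1,\ldots,\mathbf{u}_m$ non-null and $\mathbf{u}_{m+1} = \cdots = \mathbf{u}_t = \mathbf{0}$. Second, iteratively apply Corollary \ref{stessoschema} to the two rightmost non-trivial arches: each application replaces $s(\mathbf{u}_{m-1})\wedge s(\mathbf{u}_m)$ by $s(\mathbf{u}_{m-1}\wedge \mathbf{u}_m)\wedge s(\mathbf{0})$, merging two non-null arches into one while producing an extra copy of $s(\mathbf{0})$. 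After $m-1$ such applications I reach
\begin{equation*}
s(\mathbf{u}_1\wedge\mathbf{u}_2\wedge\cdots\wedge\mathbf{u}_m)\wedge \underbrace{s(\mathbf{0})\wedge\cdots\wedge s(\mathbf{0})}_{t-1}.
\end{equation*}
Third, set $\mathbf{v} := \mathbf{u}_1\wedge\cdots\wedge\mathbf{u}_m$; its length is $2(n-t) < 2n$, so by the inductive hypothesis $\mathbf{v}$ can be brought to its canonical representative $\mathbf{v}^c$ using the two operations. Performing those same moves inside the outer $s$ then yields the canonical form $s(\mathbf{v}^c)\wedge s(\mathbf{0})^{\wedge(t-1)}$ of the frame of $\mathbf{x}$.

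The main obstacle is justifying this last ``lifting'' step: we need that a sequence of commutativity and Corollary \ref{stessoschema} moves applied to $\mathbf{v}$ is still a legitimate sequence of allowed moves on the ambient path. This is not a deep issue --- since $s$ acts at the level of frames by Proposition \ref{polinomi} and both operations preserve frames, replacing $\mathbf{v}$ by a frame-equivalent path leaves the outer frame unchanged --- but it requires committing to the convention that the two allowed operations may be applied at any position in a nested gluing, not just at the top level. Finally, the degenerate case $m = 0$ (so $\mathbf{x}$ is a gluing of bare $UD$'s) needs no further work: that path is already the canonical representative of its frame $(t+1,t,0,\ldots)$.
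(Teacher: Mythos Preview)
Your argument is correct and follows essentially the same strategy as the paper: both proofs use exactly the two moves named in the statement---commute a factor $s(\mathbf 0)$ to the right, and merge two non-trivial arches $s(\mathbf y)\wedge s(\mathbf z)$ into $s(\mathbf y\wedge\mathbf z)\wedge s(\mathbf 0)$ via Corollary~\ref{stessoschema}---until no such move applies. The paper presents this as a local rewriting procedure applied anywhere in the nested expression and simply asserts termination; you organise the same reduction as an explicit induction on the length, first normalising the outermost level-$0$ decomposition and then recursing inside $s(\cdot)$. Your version has the advantage of making termination and the resulting canonical shape $s(\mathbf v^{c})\wedge s(\mathbf 0)^{\wedge(t-1)}$ explicit, at the cost of having to justify (as you note) that moves performed on the inner path $\mathbf v$ are still instances of the allowed operations on $\mathbf x$; the paper sidesteps this by tacitly allowing the rules at any depth from the outset.
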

\s\noi
\begin{proof} If   $\mathbf x$ is a representative of the frame, suppose that $\mathbf x$ contains a  product $s(\mathbf 0)\wedge s^i(\mathbf y)$, then this can be transformed into   $s^i(\mathbf y)\wedge s(\mathbf 0)$.
If it contains a product $s^i(\mathbf y)\wedge s^j(\mathbf z)$, with $s^i(\mathbf y)\ne s(\mathbf 0), s^j(\mathbf z)\ne s(\mathbf 0)$, this can be transformed into  $s(s^{i-1}(\mathbf y)\wedge s^{j-1}(\mathbf z))\wedge s(\mathbf 0)$. After a finite number of steps of these two types, one gets to a canonical path when none of these two steps is any longer possible.
\end{proof}
\m

Using Theorem \ref{canonical}, we are now able to count the number of paths in a given frame.

\begin{ex} Consider the frame  $(3,4,3,1,0,\ldots)$ as before. We already saw that its canonical representative is $\mathbf u=s(s^2(\mathbf 0)\wedge s(\mathbf 0)) \wedge s(\mathbf 0)$. From this canonical representative, using commutativity of the wedge operation, we have a total of 4 paths, including $\mathbf u$, precisely:

\noi
$s(s^2(\mathbf 0)\wedge s(\mathbf 0)) \wedge s(\mathbf 0)$,\, $s(s(\mathbf 0)\wedge s^2(\mathbf 0)) \wedge s(\mathbf 0)$,\, $s(\mathbf 0)\wedge s(s^2(\mathbf 0)\wedge s(\mathbf 0))$,\, $s(\mathbf 0)\wedge s(s(\mathbf 0)\wedge s^2(\mathbf 0))$:
\vspace{1cm}

\begin{tikzpicture}[scale=0.2]
     \tikzstyle{every node}=[draw,circle,fill=black,minimum size=2pt,inner sep=0pt,radius=1pt]
       \node (A) at (0,0) {};
       \node (B) at (1,1)  {} ;
      \node (C) at (2,2) {};
      \node (D) at (3,3) {};
       \node (E) at (4,2) {};
       \node (F) at (5,1) {};
       \node (G) at (6,2) {};
       \node (H) at (7,1) {};
       \node (I) at (8,0) {};
       \node (J) at (9,1) {};
       \node (K) at (10,0) {};
       \draw (A)--(B);
      \draw (B)--(C);
      \draw (C)--(D);
      \draw (D)--(E);
      \draw (E)--(F);
      \draw (F)--(G);
      \draw (G)--(H);
      \draw (H)--(I);
      \draw (I)--(J);
      \draw (J)--(K);

      \node (A1) at (12,0) {};
       \node (B1) at (13,1)  {} ;
      \node (C1) at (14,2) {};
      \node (D1) at (15,1) {};
       \node (E1) at (16,2) {};
       \node (F1) at (17,3) {};
       \node (G1) at (18,2) {};
       \node (H1) at (19,1) {};
       \node (I1) at (20,0) {};
       \node (J1) at (21,1) {};
       \node (K1) at (22,0) {};
       \draw (A1)--(B1);
      \draw (B1)--(C1);
      \draw (C1)--(D1);
      \draw (D1)--(E1);
      \draw (E1)--(F1);
      \draw (F1)--(G1);
      \draw (G1)--(H1);
      \draw (H1)--(I1);
      \draw (I1)--(J1);
      \draw (J1)--(K1);

      \node (A2) at (24,0) {};
       \node (B2) at (25,1)  {} ;
      \node (C2) at (26,0) {};
      \node (D2) at (27,1) {};
       \node (E2) at (28,2) {};
       \node (F2) at (29,3) {};
       \node (G2) at (30,2) {};
       \node (H2) at (31,1) {};
       \node (I2) at (32,2) {};
       \node (J2) at (33,1) {};
       \node (K2) at (34,0) {};
       \draw (A2)--(B2);
      \draw (B2)--(C2);
      \draw (C2)--(D2);
      \draw (D2)--(E2);
      \draw (E2)--(F2);
      \draw (F2)--(G2);
      \draw (G2)--(H2);
      \draw (H2)--(I2);
      \draw (I2)--(J2);
      \draw (J2)--(K2);

      \node (A3) at (36,0) {};
       \node (B3) at (37,1)  {} ;
      \node (C3) at (38,0) {};
      \node (D3) at (39,1) {};
       \node (E3) at (40,2) {};
       \node (F3) at (41,1) {};
       \node (G3) at (42,2) {};
       \node (H3) at (43,3) {};
       \node (I3) at (44,2) {};
       \node (J3) at (45,1) {};
       \node (K3) at (46,0) {};
       \draw (A3)--(B3);
      \draw (B3)--(C3);
      \draw (C3)--(D3);
      \draw (D3)--(E3);
      \draw (E3)--(F3);
      \draw (F3)--(G3);
      \draw (G3)--(H3);
      \draw (H3)--(I3);
      \draw (I3)--(J3);
      \draw (J3)--(K3);
       \end{tikzpicture}

\m
Using  Corollary \ref{stessoschema} we have also  $\mathbf v= s^3(\mathbf 0)\wedge s^2(\mathbf 0)$, which gives rise, by the  commutativity of $\wedge$, to a total of  2 paths, including $\mathbf v$:

\noi
$s^3(\mathbf 0)\wedge s^2(\mathbf 0)$, \quad $s^2(\mathbf 0)\wedge s^3(\mathbf 0)$.

\vspace{1cm}
\begin{tikzpicture}[scale=0.2]
\tikzstyle{every node}=[draw,circle,fill=black,minimum size=2pt,inner sep=0pt,radius=1pt]
      \node (A) at (13,0) {};
       \node (B) at (14,1)  {} ;
      \node (C) at (15,2) {};
      \node (D) at (16,3) {};
       \node (E) at (17,2) {};
       \node (F) at (18,1) {};
       \node (G) at (19,0) {};
       \node (H) at (20,1) {};
       \node (I) at (21,2) {};
       \node (J) at (22,1) {};
       \node (K) at (23,0) {};
       \draw (A)--(B);
      \draw (B)--(C);
      \draw (C)--(D);
      \draw (D)--(E);
      \draw (E)--(F);
      \draw (F)--(G);
      \draw (G)--(H);
      \draw (H)--(I);
      \draw (I)--(J);
      \draw (J)--(K);

      \node (A3) at (33,0) {};
       \node (B3) at (34,1)  {} ;
      \node (C3) at (35,2) {};
      \node (D3) at (36,1) {};
       \node (E3) at (37,0) {};
       \node (F3) at (38,1) {};
       \node (G3) at (39,2) {};
       \node (H3) at (40,3) {};
       \node (I3) at (41,2) {};
       \node (J3) at (42,1) {};
       \node (K3) at (43,0) {};
       \draw (A3)--(B3);
      \draw (B3)--(C3);
      \draw (C3)--(D3);
      \draw (D3)--(E3);
      \draw (E3)--(F3);
      \draw (F3)--(G3);
      \draw (G3)--(H3);
      \draw (H3)--(I3);
      \draw (I3)--(J3);
      \draw (J3)--(K3);
       \end{tikzpicture}

\m

\noi We have therefore a total of 6 paths belonging to the frame $(3,4,3,1,0,\ldots)$.
\end{ex}

We wish to compute an explicit formula for the number  $p^{2n}_{\mathbf{I}}$ of paths having frame  $\mathbf{I}=(i_0,i_1,\ldots)$ with length $2n$.

\begin{defn}
  We define a {\it right frame} to be any frame of the form $$(2,i_1,i_2,i_3,\ldots)$$ and a {\it left frame} any other frame.
\end{defn}

\begin{defn}
  Given a frame $(i_0,i_1,i_2,\ldots)$, with $i_0\geq 2$, we define its {\it right progenitor} to be the frame $(2,i_1-i_0+2,i_2,\ldots)$. If the frame has $i_0=i_1=\ldots =i_{t-1}=2$ and $i_t\ne 2$   we define its {\it left progenitor} to be $(i_t,i_{t+1},\ldots)$. If $i_0\ne 2$ its left progenitor is itself.
\end{defn}

\begin{rem}
  Every frame has a right and left progenitor except for the null frame  which has only itself as a left progenitor and no right progenitor.
\end{rem}
We can prove two propositions describing how the cardinality of a frame $\mathbf{I}$ is related to the one of its progenitors.
\begin{prop}
  The cardinality of a frame $\mathbf{I}$ is the same as that of its left progenitor.
\end{prop}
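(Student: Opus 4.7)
The plan is to exhibit an explicit bijection between paths with frame $\mathbf{I}$ and paths with frame equal to the left progenitor of $\mathbf{I}$, realized by iterating the inverse of the lifting operation.

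The key observation is a path-level version of Lemma \ref{one}: the lifting operation $s$, which prepends a $U$ step and appends a $D$ step to a Dyck path $\mathbf{x}$ of length $2m$, is a bijection between the set of all Dyck paths of length $2m$ and the set of Dyck paths of length $2m+2$ whose frame starts with $i_0=2$. The forward direction is immediate from the definitions of $s$ and of the frame. For the inverse, a Dyck path of length $2m+2$ with exactly two feet at level $0$ touches the $x$-axis only at its initial and terminal points, so it must begin with $U$ and end with $D$, with the intermediate part staying strictly above level $0$; shifting this intermediate part down by $1$ produces a well-defined Dyck path of length $2m$. Moreover, $s$ shifts the subsequent frame entries by one position, so if a path $\mathbf{x}$ has frame $(j_0,j_1,j_2,\ldots)$, then $s(\mathbf{x})$ has frame $(2,j_0,j_1,j_2,\ldots)$.

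Now let $\mathbf{I}=(i_0,i_1,i_2,\ldots)$, and suppose $i_0=i_1=\cdots=i_{t-1}=2$ with either $i_t\ne 2$ or $\mathbf{I}$ being $(2,1,0,\ldots)$; the left progenitor is then $\mathbf{J}=(i_t,i_{t+1},\ldots)$ (where, in the degenerate case, $\mathbf{J}$ is the null frame). Applying the inverse of $s$ exactly $t$ times to a path $\mathbf{p}$ with frame $\mathbf{I}$ yields a path with frame $\mathbf{J}$, and this $t$-fold inverse lifting is itself a bijection (being a composition of bijections). In the remaining case $i_0\ne 2$, the left progenitor equals $\mathbf{I}$ and there is nothing to prove.

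The only real content to check is that the intermediate frames $(i_1,i_2,\ldots),(i_2,i_3,\ldots),\ldots,(i_{t-1},i_t,\ldots)$ are all admissible (so that Lemma \ref{one} applies at every step and the un-lifting stays within the class of Dyck paths), but this follows immediately by $t-1$ applications of Lemma \ref{one} starting from the admissibility of $\mathbf{I}$. So no serious obstacle arises; the content of the proposition is essentially that $s$ is a bijection on paths whose effect on frames is to prepend a $2$.
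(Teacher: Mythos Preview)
Your proof is correct and follows essentially the same approach as the paper: both argue that the lifting operation $s$ on Dyck paths induces a bijection between paths with frame $\mathbf{I}$ and paths with frame $s(\mathbf{I})$, and then iterate to reach the left progenitor. You simply spell out more of the details (the explicit inverse of $s$, the admissibility of the intermediate frames via Lemma~\ref{one}) that the paper leaves implicit.
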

\begin{proof}
  It is easy to see that there is a bijection between the set of paths realizing the frame $\mathbf{I}$ and those realizing the frame $s(\mathbf{I})$. Indeed, if $\mathbf p$ is a path of the frame $\mathbf{I}$ then $s(\mathbf p)$ is a path of the frame $s(\mathbf{I})$, moreover $s$ is an invertible operation. By repeatedly applying this argument we reach its left progenitor.
\end{proof}
Obviously,
\begin{lem}\label{sega}
  The cardinality of the frame $(u+1,u,0,\ldots)$, $\forall u \in \mathbb N$, is $1$.
\end{lem}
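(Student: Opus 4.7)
The plan is to argue that the frame $(u+1,u,0,\ldots)$ forces the path to be confined to levels $0$ and $1$, and from there to observe that there is only one such path.

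First I would note that since $i_k=0$ for all $k\ge 2$, a Dyck path realizing this frame never touches level $2$ or higher; equivalently, it visits only levels $0$ and $1$. Because a Dyck path proceeds exclusively by up-steps $U$ (adding $1$ to the $y$-coordinate) and down-steps $D$ (subtracting $1$), and starts at level $0$, the only move available from level $0$ is $U$ (going to level $1$) and from level $1$ the only way to remain in $\{0,1\}$ is $D$ (going back to level $0$). Consequently, the associated sequence of $y$-coordinates must be the alternating sequence $0,1,0,1,\ldots,0$, i.e., the path is uniquely $(UD)^u$.

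Next I would verify that this unique path indeed has frame $(u+1,u,0,\ldots)$: its coordinate sequence contains the value $0$ exactly $u+1$ times (at positions $0,2,4,\ldots,2u$) and the value $1$ exactly $u$ times (at positions $1,3,\ldots,2u-1$), which matches the required frame. The length is $2u$, in agreement with the length formula $-1+\sum_k i_k=-1+(u+1)+u=2u$. The boundary case $u=0$ gives the null frame $(1,0,\ldots)$ realized by the null path.

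There is no real obstacle here; the argument is essentially a one-line observation that confining a Dyck path to $\{0,1\}$ forces it to alternate. One could alternatively derive this by induction using the operations $r$ and $b$ of Lemmas \ref{one} and \ref{two}, stripping off the leading $2$ (or subtracting $1$ from the first two entries) until reaching the null frame, and noting at each step that the preimage of a unique path under $s$ or $a$ remains unique; but the direct argument above is more transparent.
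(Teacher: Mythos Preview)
Your argument is correct: a Dyck path with frame $(u+1,u,0,\ldots)$ is confined to levels $0$ and $1$, hence must alternate $U$ and $D$, giving the unique path $(UD)^u$. The paper does not prove this lemma at all; it merely prefaces the statement with the word ``Obviously,'' so your write-up is already more detailed than what the paper provides.
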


\begin{prop}\label{card1}
  Given a frame of the form $\mathbf X=(2+n,a,b\ldots)$ with $n\geq 0$, let $\mathbf Y=(2,a-n,b,\ldots)$ be its right  progenitor and  $k$ the cardinality of $\mathbf Y$. Then the cardinality of $\mathbf X$ is
  $$
  k\binom{a-1}{a-n-1}.
  $$
\end{prop}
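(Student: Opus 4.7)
The plan is to establish a surjection $\Phi$ from paths with frame $\mathbf X$ onto paths with frame $\mathbf Y$ whose fibers all have cardinality $\binom{a-1}{a-n-1}$. Since $\mathbf X = (n+2, a, b, \ldots)$ has $i_0 = n+2$, every representative $\mathbf p$ touches level $0$ exactly $n+2$ times, and so admits a unique arch decomposition
\[
\mathbf p \;=\; s(\mathbf w_1) \wedge s(\mathbf w_2) \wedge \cdots \wedge s(\mathbf w_{n+1}),
\]
where each $\mathbf w_k$ is a Dyck path (possibly the null path $\mathbf 0$). Writing $f_k$ for the number of level-$0$ feet of $\mathbf w_k$, the lifting rule tells us that $s(\mathbf w_k)$ has $f_k$ feet at level $1$, so the prescription of $a$ feet at level $1$ in $\mathbf X$ forces $\sum_{k=1}^{n+1} f_k = a$ with each $f_k \geq 1$.

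I would then set $\Phi(\mathbf p) = s(\mathbf w_1 \wedge \mathbf w_2 \wedge \cdots \wedge \mathbf w_{n+1})$. By the gluing rule $(i_0, i_1, \ldots) \wedge (j_0, j_1, \ldots) = (i_0+j_0-1, i_1+j_1, \ldots)$ applied iteratively, the inner wedge $\mathbf w_1 \wedge \cdots \wedge \mathbf w_{n+1}$ has $\sum f_k - n = a - n$ feet at level $0$, and at each level $\ell \geq 1$ the feet sum to the corresponding coordinate of $\mathbf X$ beyond its first entry, namely $b, c, \ldots$. Lifting then yields frame $(2, a-n, b, \ldots) = \mathbf Y$, so $\Phi$ is well-defined; surjectivity is immediate because any $s(\mathbf w) \in \mathbf Y$ is reached by the trivial factorization $\mathbf w = \mathbf w \wedge \mathbf 0 \wedge \cdots \wedge \mathbf 0$.

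Finally, I would count the fiber of $\Phi$ over a given $s(\mathbf w) \in \mathbf Y$. Since the arch decomposition is unique, a preimage is the same datum as an ordered factorization $\mathbf w = \mathbf w_1 \wedge \cdots \wedge \mathbf w_{n+1}$ into $n+1$ Dyck factors. Such a factorization is specified uniquely by a weakly increasing sequence $1 \leq c_1 \leq c_2 \leq \cdots \leq c_n \leq a-n$ of $n$ cut points chosen (with repetition) among the $a-n$ level-$0$ feet of $\mathbf w$, where a repeated value inserts a null factor between consecutive pieces; every such sequence gives a valid factorization because the portion of $\mathbf w$ between any two consecutive level-$0$ touches is itself a Dyck arch. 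The number of weakly increasing $n$-tuples from a set of size $a-n$ is $\binom{(a-n)+n-1}{n} = \binom{a-1}{n} = \binom{a-1}{a-n-1}$, yielding $|\mathbf X| = k\binom{a-1}{a-n-1}$.

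The only delicate step is the bijection between ordered factorizations of $\mathbf w$ and weakly increasing choices of cut points: one must verify that repeated cuts correspond bijectively to insertions of the null factor and that distinct tuples yield distinct factorizations. Everything else is routine bookkeeping with the frame algebra of Proposition \ref{polinomi} together with the uniqueness of the arch decomposition at level zero.
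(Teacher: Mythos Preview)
Your proof is correct and follows essentially the same idea as the paper's: both establish a $\binom{a-1}{n}$-to-$1$ correspondence between paths of frame $\mathbf X$ and paths of frame $\mathbf Y$ by distributing $n$ ``hats'' among $a-n$ slots (weak compositions of $n$ into $a-n$ parts). The only difference is directional --- the paper builds the map from $\mathbf Y$ to $\mathbf X$ by inserting $n$ copies of $s(\mathbf 0)$ into a $\mathbf Y$-path and then checks injectivity, whereas you construct the inverse surjection $\Phi:\mathbf X\to\mathbf Y$ via the arch decomposition and count its fibers; the underlying bijection is identical.
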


  \begin{proof}
  If $b=0$ Theorem \ref{immediate} implies that the frame is of the form  $(u+1,u,0,\ldots)$.  The right progenitor is $(2,1,0,\ldots)$ with cardinality $k=1$ and the formula holds by Lemma \ref{sega}.
  If $b>0$ then  $a\geq 2+n$, by Theorem \ref{CNES}.
There is a bijection between the set of paths realizing the frame $\mathbf X$ and the weak compositions of $n$ into $a-n$ parts. Hence the formula.
In fact, there are $a-n$  positions  in $\mathbf Y$, ($a-n-2$ feet at level 1 of the path plus the two end-points at level 0) in which one can distribute the $n$ paths $s(\mathbf 0)$,  using Proposition \ref{identita} or the commutativity property. This means that each of the $n$ ``hats'' must be located in $a-n$ positions.

In other words, any path in $\mathbf Y$ gives rise to a path in the frame $\mathbf X$ by inserting $n$ pairs $01$. These pairs may be distributed
 either at the beginning, in the order $01$, or at the end  as $10$, or in the interior  for every possible pair $12$ as $1012$.
We are left to prove that if we take two different paths $\mathbf p$ and $\mathbf p'$ in the frame $\mathbf Y$  they give rise to different paths in $\mathbf X$.
Notice that there are no 0's in the middle of the associated sequences to $\mathbf p$ and $\mathbf p'$.
In the first position where they differ, one has the pair $a, a+1$ and the other has $a,a-1$ where $a\geq 2$. If $a>2$,  a pair $01$ or $10$ cannot be inserted after $a$ so the paths remain different. If $a=2$ then we have the sequence $\ldots abc21\ldots$ and $\ldots abc23\ldots$. The insertion of pairs $1,0$ or $0,1$ cannot turn these two sequences into equal ones.


  \end{proof}

\begin{thm}
 Given the frame $\mathbf{I}=(i_0,i_1,\ldots,i_f,0\ldots)$, setting:

$j_1=i_0-2,$

$j_2=i_1-i_0,$

$j_3=i_2-i_1+i_0-2,$

$j_4=i_3-i_2+i_1-i_0,$

$\ldots$

the  cardinality of $\mathbf{I}$ is

$${\binom{i_1-1}{i_1-j_1-1}}{\binom{i_2-1}{i_2-j_2-1}} \cdots {\binom{i_f-1}{i_f-j_f-1}}.$$

\end{thm}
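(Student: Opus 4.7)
The plan is to induct on the degree $f$ of the frame $\mathbf{I}=(i_0,i_1,\ldots,i_f,0,\ldots)$, peeling off one binomial factor at a time by alternating between the right-progenitor formula of Proposition~\ref{card1} and the invariance of cardinality under the left progenitor.

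The base cases are immediate. For $f=0$ the null frame has cardinality $1$ and the right-hand side is the empty product. For $f=1$, Theorem~\ref{immediate} forces $\mathbf{I}=(u+1,u,0,\ldots)$; Lemma~\ref{sega} gives cardinality $1$, while $j_1=u-1$ makes the formula produce $\binom{u-1}{0}=1$.

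For the inductive step with $f\geq 2$, Proposition~\ref{card1} applied with $n=i_0-2=j_1$ and $a=i_1$ gives $|\mathbf{I}|=\binom{i_1-1}{i_1-j_1-1}\,|\mathbf{Y}|$, where $\mathbf{Y}=(2,\,i_1-i_0+2,\,i_2,\ldots,i_f,0,\ldots)$ is the right progenitor; this is a tautology with multiplier $1$ when $i_0=2$, and otherwise genuinely peels off one factor. Since $\mathbf{Y}$ now begins with a $2$, the left-progenitor proposition replaces it with a frame $\mathbf{Y}^{\ast}$ of the same cardinality but of degree $f-t$ for some $t\geq 1$, and the inductive hypothesis then applies to $\mathbf{Y}^{\ast}$.

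The remaining verification is a routine index check. Writing $\mathbf{Y}^{\ast}=(i'_0,i'_1,\ldots)$, one has $i'_k=i_{k+t}$, and a direct manipulation of the alternating sums defining the analogous quantities $j'_k$ gives $j'_k=j_{k+t}$. Moreover, the fact that $\mathbf{Y}$ has $t$ leading $2$'s translates into $i_1=i_0$ together with $i_2=\cdots=i_{t-1}=2$ whenever $t\geq 2$, and this in turn forces $j_2=\cdots=j_t=0$, so the ``missing'' factors $\binom{i_k-1}{i_k-j_k-1}$ for $2\leq k\leq t$ all collapse to $1$. Substituting the inductive expression $|\mathbf{Y}^{\ast}|=\prod_{k=1}^{f-t}\binom{i_{k+t}-1}{i_{k+t}-j_{k+t}-1}$ into the reduction identity reassembles exactly $\prod_{k=1}^{f}\binom{i_k-1}{i_k-j_k-1}$. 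I expect the main obstacle to be precisely this alignment in the subcase $i_1=i_0$, where multiple strippings happen at once and the correspondence between the vanishing $j_k$'s and the skipped binomial factors must be tracked carefully.
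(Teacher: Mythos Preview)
Your argument is correct and is essentially the same reduction as the paper's: both pass repeatedly to the right progenitor via Proposition~\ref{card1} and then strip leading $2$'s. The paper organizes the iteration slightly more smoothly by removing exactly one leading $2$ per step (a single application of $r$) rather than the full left progenitor; then the degree drops by exactly one at each stage, the case $j_k=0$ becomes simply a trivial step, and the factor $\binom{i_k-1}{i_k-1}=1$ appears automatically without a separate case analysis on $t$.

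One small inaccuracy worth fixing: the claim $i'_k=i_{k+t}$ fails for $k=0$ when $t=1$, since then $i'_0=i_1-i_0+2=2+j_2$ rather than $i_1$. This does not damage your conclusion, because only $i'_k$ with $k\ge 1$ enter the binomial product, and the identity $j'_k=j_{k+t}$ that you actually need still follows from the recursion $j_{m+1}=i_m-j_m-2$; but the sentence should be restricted to $k\ge 1$.
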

\begin{proof}

Step $A_1$: subtract $j_1$ from the first two  elements of $\mathbf{I}$. We get the right  progenitor of $\mathbf{I}$:

$$\mathbf x_1 = b^{j_1}(\mathbf{I})=(2,i_1-j_1,i_2,...) = (2,i_1-i_0+2,i_2,...) = (2,2+j_2,i_2,...).$$

\m\noi
Step $B_1$: remove the initial  2 obtaining:
$$\mathbf y_1 =r(\mathbf x_1) =(2+j_2,i_2,...).$$
Notice that if $j_2>0$ then $\mathbf y_1$ is the left progenitor of $\mathbf x_1$.

By the previous propositions we have:

\begin{equation*}
  \begin{split}
& |\mathbf x_1|= |\mathbf y_1|\\
&|\mathbf{I}|= \binom{i_1-1}{i_1-j_1-1}|\mathbf x_1| = \binom{i_1-1}{i_1-j_1-1}|\mathbf y_1|.
  \end{split}
\end{equation*}
The second  identity follows from Proposition \ref{card1} if $j_1>0$ while it is trivial if $j_1=0$ (in this case $\mathbf x_1=\mathbf{I}$).

\m\noi
For $k> 1$, step $A_{k}$: subtract $j_{k}$ to the first two elements of $\mathbf y_{k-1}$. We get the right progenitor of $\mathbf y_{k-1}$:

$$\mathbf x_k = b^{j_k}(\mathbf y_{k-1})=(2,i_k-j_k,i_{k+1},...)  = (2,2+j_{k+1},i_{k+1},...).$$
Step $B_{k}$: Remove the initial 2 to obtain:
$$\mathbf y_k =r(\mathbf x_k)=(2+j_{k+1},i_{k+1},...).$$
Notice that if $j_{k+1}>0$ then $\mathbf y_k$ is the left progenitor of $\mathbf x_k$.
By the previous propositions we have:
\begin{equation*}
  \begin{split}
&|\mathbf x_k|= |\mathbf y_k|\\
&|\mathbf y_{k-1}|= \binom{i_k-1}{i_k-j_k-1}|\mathbf x_k| = \binom{i_k-1}{i_k-j_k-1}|\mathbf y_k|.
  \end{split}
\end{equation*}
The  second identity follows from  Proposition \ref{card1} if  $j_k>0$ while it is trivial if $j_k=0$ (in such case $\mathbf x_k=\mathbf y_{k-1}$).

We may deduce:

 $$|\mathbf{I}|=  \binom{i_1-1}{i_1-j_1-1} \ldots\binom{i_k-1}{i_k-j_k-1}|\mathbf y_k|.$$
Finally,
Step $A_{f-1}$: subtract $j_{f-1}$ to the first two elements of $\mathbf y_{f-2}$. We get the right progenitor of $\mathbf y_{f-2}$:
$$\mathbf x_{f-1} = b^{j_{f-1}}(\mathbf y_{f-2})=(2,i_{f-1}-j_{f-1},i_f,0,...)  = (2,2+j_{f},i_f,0,...).$$

\noi
Step $B_{f-1}$: Remove the initial 2 to obtain:
$$\mathbf y_{f-1} =r(\mathbf x_{f-1})=(2+j_{f},i_f,0,,...),$$
with
 $|\mathbf x_{f-1}|= |\mathbf y_{f-1}|$ and
$$|\mathbf y_{f-2}|= \binom{i_{f-1}-1}{i_{f-1}-j_{f-1}-1}|\mathbf x_{f-1}| = \binom{i_{f-1}-1}{i_{f-1}-j_{f-1}-1}|\mathbf y_{f-1}|,$$
hence
$$|\mathbf{I}|=  \binom{i_1-1}{i_1-j_1-1} \ldots\binom{i_{f-1}-1}{i_{f-1}-j_{f-1}-1}|\mathbf y_{f-1}|.$$
By Proposition \ref{CNES}, $\mathbf{y}_{f-1}$ is necessarily of the form $(u+1,u,0,...)$, with right  progenitor $\mathbf{x}_f=(2,1,0,...)$, which has  cardinality 1. By Proposition \ref{card1}, we have:

\noi
$|\mathbf{y}_{f-1}| = |\mathbf{x}_f|\binom{u-1}{0}=1$. Finally, since  Theorem \ref{CNES} implies $i_f-j_f-1=0$, we have $\binom{i_f-1}{i_f-j_f-1}=\binom{i_f-1}{0}=1$, hence the conclusion.

\end{proof}
\begin{ex}
  The cardinality of the frame $\mathbf{I}=(5,8,7,3)$ is
  $$
  \binom{7}{4}\binom{6}{3}\binom{2}{0}=700
  $$
\end{ex}

\section{Colored  Dyck paths}
For any fixed frame  $(i_0,\ldots, i_f,\,\ldots)$ with length $2n$, denote by  $v_k$ the number of U steps joining the levels  $k$, $k+1$ ($k=0,...,f-1$). Such number is clearly equal to the number of the D steps joining the same two levels.

\begin{thm}
In the above notation:
 $$v_k= i_k-i_{k-1}+...+(-1)^k i_0+ (-1)^{k+1},\quad k=0,...,f-1.$$

\end{thm}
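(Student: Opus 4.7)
The plan is to establish the formula by induction on $k$, based on a simple balance equation between the number of feet at each level and the up-steps crossing from one level to the next. The only nontrivial observation is that the number of U-steps from level $k$ to level $k+1$ equals the number of D-steps from level $k+1$ to level $k$ (because the path returns to zero), so I may freely identify $v_k$ with either count.

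First I would handle the base case $k=0$ by counting outgoing steps at level zero. Each of the $i_0$ feet at level zero has an outgoing step, except the last one, which is the endpoint $(2n,0)$ of the path; every such outgoing step must be a U (there is no level $-1$). Hence $v_0 = i_0 - 1$, which agrees with the formula for $k=0$.

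Next I would prove the recurrence
\begin{equation*}
v_{k-1} + v_k = i_k \qquad (k \geq 1)
\end{equation*}
by the same double counting. For each foot at level $k$ with $k\geq 1$, there is exactly one outgoing step (the path neither starts nor ends at a positive level), and this step is either a U from level $k$ to level $k+1$, contributing to $v_k$, or a D from level $k$ to level $k-1$, contributing to $v_{k-1}$ (using the identification above). Summing over the $i_k$ feet gives the recurrence.

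Finally I would solve the recurrence by induction: assuming
$v_{k-1} = i_{k-1} - i_{k-2} + \cdots + (-1)^{k-1}i_0 + (-1)^k$,
rewriting $v_k = i_k - v_{k-1}$ and distributing the minus sign produces exactly the stated alternating sum for $v_k$. No step in this argument is a genuine obstacle; the only thing to be a bit careful about is the boundary contribution at level zero, which is precisely what forces the extra term $(-1)^{k+1}$ in the closed form.
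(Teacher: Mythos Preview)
Your argument is correct and essentially the same as the paper's: both prove $v_0=i_0-1$ by examining level-zero nodes and then establish the recurrence $v_k=i_k-v_{k-1}$ by a local count at level $k$ (you count outgoing steps, the paper counts all incident steps, but these are trivially equivalent), finishing by the same induction.
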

\begin{proof} Since at level  0 from the first node we have a  U step, and we also have a D step in the final node, while in all the others we certainly have a U and a D, we have a total of $2(i_0-1)$ steps joining level 0 and level 1. Since there are as many U as D we must have:
$v_0=i_0-1$.
Assume we proved the formula up to  $v_{k-1}$, we prove it for  $v_k$. From the  $i_k$ nodes at level  $k$ we have  $2i_k$ steps, half of which are  U and half are D. Of these,  $2v_{k-1}$ come from the lower level. It follows that $v_k= i_k -v_{k-1} =i_k - (i_{k-1}-...+(-1)^{k -1}i_0+ (-1)^{k})$, hence the result.
\end{proof}
\m
Notice that the number $v_k$ depends only on the given frame and not on the particular path in that frame.

Denote by  $I_{2n}$ the set of frames  with length $2n$. Suppose that we can color with $u_k$ colors the  U steps joining level $k$ and level $k+1$ and with  $d_k$ colors the D steps joining the same levels ($k=0,...,n-1$). The number of colored paths of a given frame  $\mathbf{I}=(i_0,i_1,\ldots)$ with length $2n$, is clearly
$p^{2n}_{\mathbf{I}}u_0^{v_0}...u_{n-1}^{v_{n-1}}d_0^{v_0}...d_{n-1}^{v_{n-1}}$, recalling that $p^{2n}_{\mathbf{I}}$ denotes the number of elements in the frame  $\mathbf{I}=(i_0,i_1,\ldots)$ with length $2n$. We immediately have

\m\noi
\begin{thm}
 The number of Dyck paths with length  $2n$, that can be colored with  $u_k$ and $d_k$ colors from level  $k$ to level $k+1$ ($k=0,...,n-1$) is
$$ \sum_{\mathbf{I} \in I_{2n}} p^{2n}_{\mathbf{I}}u_0^{v_0}...u_{n-1}^{v_{n-1}}d_0^{v_0}...d_{n-1}^{v_{n-1}}.$$
\end{thm}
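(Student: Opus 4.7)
The plan is to argue by a straightforward partition and product-rule count, leveraging the two facts already established: (i) every Dyck path of length $2n$ has a unique frame lying in $I_{2n}$, and (ii) by the previous theorem, the integer $v_k$ depends only on the frame, not on the particular path within that frame.

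First I would partition the set of length-$2n$ Dyck paths according to their frames. Since each path determines a unique frame and only finitely many frames occur for length $2n$, one obtains a disjoint union indexed by $\mathbf{I}\in I_{2n}$. The number of (uncolored) paths in the block corresponding to $\mathbf{I}$ is, by definition, $p^{2n}_{\mathbf{I}}$.

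Next I would count the number of colorings of a single path $\mathbf{p}$ in a fixed frame $\mathbf{I}$. By the preceding theorem the path $\mathbf{p}$ has exactly $v_k$ up-steps and exactly $v_k$ down-steps joining levels $k$ and $k+1$, for each $k=0,\ldots,n-1$, where these $v_k$ depend only on $\mathbf{I}$. Since each such up-step can be independently assigned one of $u_k$ colors and each such down-step one of $d_k$ colors, the total number of colorings of $\mathbf{p}$ is
\[
\prod_{k=0}^{n-1} u_k^{v_k} d_k^{v_k} \;=\; u_0^{v_0}\cdots u_{n-1}^{v_{n-1}}\, d_0^{v_0}\cdots d_{n-1}^{v_{n-1}}.
\]
Crucially, this quantity is the same for every path in the frame $\mathbf{I}$, because $v_k$ is a frame-invariant.

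Finally, I would assemble the count: for each frame $\mathbf{I}$ the contribution to the total is $p^{2n}_{\mathbf{I}}$ paths, each admitting the same number $u_0^{v_0}\cdots u_{n-1}^{v_{n-1}} d_0^{v_0}\cdots d_{n-1}^{v_{n-1}}$ of colorings, and summing over $\mathbf{I}\in I_{2n}$ yields the claimed formula. There is no serious obstacle here; the only subtlety worth emphasizing is the invariance of $v_k$ within a frame, which is precisely what licenses factoring the monomial out of the sum over individual paths and keeping only the frame count $p^{2n}_{\mathbf{I}}$.
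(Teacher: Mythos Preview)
Your proposal is correct and is precisely the argument the paper gives, only more carefully spelled out: the paper simply observes that the number of colored paths with frame $\mathbf{I}$ is ``clearly'' $p^{2n}_{\mathbf{I}}u_0^{v_0}\cdots u_{n-1}^{v_{n-1}}d_0^{v_0}\cdots d_{n-1}^{v_{n-1}}$ and then states the theorem as following ``immediately'' by summing over $\mathbf{I}\in I_{2n}$. Your emphasis on the frame-invariance of $v_k$ as the key point licensing the factorization is exactly the content of the paper's remark that ``the number $v_k$ depends only on the given frame and not on the particular path in that frame.''
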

\m
Notice that, the number  $v_k$ depends also on the frame $\mathbf I$. However, to explicitly indicate such relation in the notation would make for a quite cumbersome symbol such as $v_k(p^{2n}_{\mathbf{I}})$.

\section{ Colored Motzkin paths}

\begin{prop}
  If $m^{(n)}$ is the number of Motzkin paths with length  $n$, and we set $\nu={\left[ {\frac{n}{2}}\right]}$, then
  \begin{equation}\label{motzkin1}
m^{(n)}= \sum_{j=0}^{\nu}\sum_{\mathbf{I} \in  I_{2j}} p^{2j}_{\mathbf{I}} {\binom{n}{n-2j}}.
\end{equation}

\end{prop}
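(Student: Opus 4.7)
The plan is to mimic the proof of Theorem \ref{thkmotzkin}, but with the crucial difference that horizontal steps may now be placed at vertices of any level, not just at a single prescribed level. Thus instead of the $i$ positions available when the underlying Dyck path is $i$-ped at level $k$, we have all $2j+1$ vertices of a Dyck path of length $2j$ as potential insertion sites. The frame $\mathbf{I}=(i_0,i_1,\ldots)$ of a length-$2j$ Dyck path satisfies $\sum_k i_k = 2j+1$, which is precisely the total number of vertices.

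First I would set up the bijection. Given a Motzkin path $M$ of length $n$, delete its horizontal steps; what remains is a Dyck path $D$ of some even length $2j$ with $0\le 2j\le n$, so $j$ ranges over $\{0,1,\ldots,\nu\}$. Conversely, $M$ is recovered from $D$ together with the datum of how many consecutive horizontal steps are inserted at each of the $2j+1$ vertices of $D$, namely a weak composition of $n-2j$ into $2j+1$ parts.

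Next I would count. The number of weak compositions of $n-2j$ into $2j+1$ parts equals
\[
\binom{(n-2j)+(2j+1)-1}{(2j+1)-1}=\binom{n}{2j}=\binom{n}{n-2j},
\]
independently of which Dyck path $D$ we started from. Partitioning the Dyck paths of length $2j$ according to their frames $\mathbf{I}\in I_{2j}$ (so there are $p^{2j}_{\mathbf{I}}$ such Dyck paths for each $\mathbf{I}$) and summing over $j$, one obtains
\[
m^{(n)}=\sum_{j=0}^{\nu}\binom{n}{n-2j}\sum_{\mathbf{I}\in I_{2j}}p^{2j}_{\mathbf{I}}=\sum_{j=0}^{\nu}\sum_{\mathbf{I}\in I_{2j}}p^{2j}_{\mathbf{I}}\binom{n}{n-2j},
\]
which is the claimed identity.

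There is no real obstacle here, since every ingredient has been prepared by the earlier discussion: the decomposition Motzkin path $\longleftrightarrow$ (Dyck path, insertion data) is immediate, and the factor $\binom{n}{n-2j}$ comes out of an elementary stars-and-bars calculation. The only minor point worth emphasizing in the write-up is that the sum $\sum_{\mathbf{I}\in I_{2j}}p^{2j}_{\mathbf{I}}$ simply recovers the total number of Dyck paths of length $2j$ (the Catalan number $C_j$), so the formula reduces to the classical $m^{(n)}=\sum_{j=0}^{\nu}C_j\binom{n}{2j}$; writing it via frames is a deliberate choice that meshes with the formalism developed in the preceding sections.
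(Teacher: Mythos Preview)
Your argument is correct and is essentially identical to the paper's own proof: both obtain each Motzkin path from a Dyck path of length $2j$ by distributing $n-2j$ horizontal steps among its $2j+1$ vertices via stars-and-bars, yielding the factor $\binom{n}{n-2j}$, and then sum over frames. Your added observation that $\sum_{\mathbf{I}\in I_{2j}}p^{2j}_{\mathbf{I}}=C_j$ is a nice clarifying remark but not needed for the proof.
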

\begin{proof}
Analogously to what was observed in the case of Theorem \ref{thkmotzkin}, a Motzkin path with length $n$ may be obtained from a Dyck path  $\mathbf p$ with length  $2j$, $0\le 2j\le n$, by adding  $n-2j$ horizontal steps at the feet at various levels of $\mathbf p$. If $\mathbf p$ has frame $\mathbf{I}=(i_0,\ldots,)$, we may distribute the $n-2j$ horizontal steps at each of the $2j+1$ nodes of $\mathbf p$. So this can be done in as many ways as the  possibility to express  $n-2j$ as a sum of  $2j+1$  integers between  0 and $n-2j$, that is,  in $\binom{n-2j+(2j+1)-1}{n-2j}=\binom{n}{n-2j}$ ways.  The statement follows.
\end{proof}

 The $n-2j$ horizontal steps can be distributed as follows: $k_0$ steps at each of the $i_0$ nodes of level 0, $k_1$ steps  at  the $i_1$ nodes of level 1, and so on. The number of such possible arrangements are counted by weak compositions of suitable integers.  If at each level  $r$ one uses  $h_{r}$ colors on the horizontal steps and denotes by $\mathcal H=(h_0,\ldots, h_{\nu})$, the formula becomes
 \begin{prop}
$$m_{\mathcal H}^{(n)} \!= \!\sum_{j=0}^{\nu}\sum_{\mathbf{I} \in  I_{2j}} p^{2j}_{\mathbf{I}} \sum_{k_0+\cdots +k_\nu=n-2j}\!{\binom{k_0+\!i_{0}\!-\!1}{k_0}}\!\cdots \!{\binom{k_\nu +\!i_{\nu}\!-\!1}{k_\nu}}h^{k_0}_{0}\!\cdots \!h^{k_{\nu}}_{\nu}.$$
\end{prop}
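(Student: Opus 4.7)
The plan is to mimic the proof of the preceding (uncolored) proposition, but to refine the count of horizontal-step placements by tracking, level by level, both how many horizontal steps are inserted and how they are colored. Fix $n$ and set $\nu = [n/2]$. Every Motzkin path of length $n$ arises uniquely from a Dyck path $\mathbf{p}$ of length $2j$ (for some $0 \le j \le \nu$) together with an insertion of $n - 2j$ horizontal steps distributed among the vertices of $\mathbf{p}$. I would therefore begin by partitioning the set of length-$n$ Motzkin paths with the coloring data according to (i) the underlying Dyck skeleton $\mathbf{p}$ and (ii) the sequence $(k_0, k_1, \ldots, k_\nu)$ where $k_r$ denotes the number of horizontal steps inserted at vertices of height $r$ in $\mathbf{p}$.

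Next I would argue that, given $\mathbf{p}$ with frame $\mathbf{I} = (i_0, i_1, \ldots)$, and given the target vector $(k_0, \ldots, k_\nu)$ with $\sum k_r = n - 2j$, the contribution factorizes across levels. At level $r$, the $i_r$ feet of $\mathbf{p}$ are available as insertion sites for $k_r$ identical horizontal positions, so the number of uncolored insertion patterns at level $r$ is the number of weak compositions of $k_r$ into $i_r$ parts, namely $\binom{k_r + i_r - 1}{k_r}$. Independently, each of the $k_r$ horizontal steps placed at level $r$ can be assigned one of $h_r$ colors, producing a factor $h_r^{k_r}$. Multiplying across levels yields
\begin{equation*}
\prod_{r=0}^{\nu} \binom{k_r + i_r - 1}{k_r} h_r^{k_r}
\end{equation*}
for the total number of colored Motzkin paths obtained from the fixed Dyck path $\mathbf{p}$ via the vector $(k_0, \ldots, k_\nu)$.

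Then I would sum: first over the compositions $k_0 + \cdots + k_\nu = n - 2j$, then over the Dyck paths $\mathbf{p}$ with frame $\mathbf{I}$ (there are $p^{2j}_{\mathbf{I}}$ such paths, and the formula depends on $\mathbf{p}$ only through $\mathbf{I}$), then over $\mathbf{I} \in I_{2j}$, and finally over $j$ from $0$ to $\nu$. This yields exactly the stated formula for $m^{(n)}_{\mathcal H}$.

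The only mild subtlety — and the step I would flag as the main obstacle — is verifying that no index $r > \nu$ can contribute: since a Dyck path of length $2j \le 2\nu$ cannot reach a level higher than $\nu$, one has $i_r = 0$ for $r > \nu$, and the convention $\binom{-1}{0} = 1$ together with $k_r = 0$ forces a harmless factor of $1$, so the $(\nu+1)$-tuple $(k_0, \ldots, k_\nu)$ really is the correct index set. One should also remark that for $j = \nu$ with $n = 2\nu$, the only allowed composition is $(0, \ldots, 0)$ and each binomial $\binom{i_r - 1}{0}$ equals $1$, recovering the purely Dyck contribution and matching the convention used in Theorem~\ref{thkmotzkin}.
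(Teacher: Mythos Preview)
Your argument is correct and follows exactly the approach the paper takes: the paper's own justification is the short paragraph immediately preceding the proposition, which refines the count in Proposition~6.1 by recording, for each level $r$, the number $k_r$ of horizontal steps inserted there (counted by weak compositions of $k_r$ into $i_r$ parts) and then multiplying by the color factor $h_r^{k_r}$. Your write-up is simply a more detailed version of that same idea; the edge-case remark you flag (levels $r$ with $i_r=0$) is precisely what the paper comments on in the sentence following the proposition.
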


It may happen that in the summation  corresponding to a   frame, the frame is too ``low'' to allow adding horizontal steps, for example at level $\nu$. In this case the corresponding binomial coefficient  ${\binom{k_\nu+\!i_{\nu}\!-\!1}{k_\nu}}$  has  $i_{\nu}=0$ and is therefore zero.

If we add to this the possibility of coloring the U and D steps with  $u_k$ e $d_k$ colors from level $k$ to level $k+1$ ($k=0,...,\nu-1$),
and we denote by $\mathcal{U}=(u_0, \ldots, u_{\nu -1}),\mathcal{D}=(d_0, \ldots, d_{\nu -1})$,
one gets

\begin{thm} Denoting by $m_{\mathcal{H},\mathcal{U},\mathcal{D}}^{(n)}$ the number of Motzkin paths colored with colors determined by $\mathcal{H},\mathcal{U},\mathcal{D}$, we have that
$m_{\mathcal{H},\mathcal{U},\mathcal{D}}^{(n)}$ is equal to
\begin{equation}\label{motzkin2}
   \sum_{\substack{j=0,\ldots,\nu\\\mathbf{I} \in  I_{2j}}}\!\!\! p^{2j}_{\mathbf{I}} u_0^{v_0}...u_{j-1}^{v_{j-1}}d_0^{v_0}...d_{j-1}^{v_{j-1}}\!\!\!\sum_{\substack{k_0+\cdots +k_{\nu}\\=n-2j}}\!{\binom{k_0+\!i_{0}\!-\!1}{k_0}}\!\cdots \!{\binom{k_{\nu}+\!i_{{\nu}}\!-\!1}{k_{\nu}}}h^{k_0}_{0}\!\cdots \!h^{k_{\nu}}_{{\nu}}.
\end{equation}

\end{thm}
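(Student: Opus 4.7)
The plan is to give a bijective decomposition of colored Motzkin paths in which each such path is classified by its underlying Dyck structure. Every Motzkin path of length $n$ arises uniquely from a Dyck path by inserting horizontal steps at its feet: erasing all $(1,0)$-steps from a colored Motzkin path yields a colored Dyck path of some even length $2j \le n$, while recording the positions and colors of the erased horizontal steps reconstructs the original path. Setting $\nu = [n/2]$, we have $0 \le 2j \le n$.

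First I would fix $j \in \{0,1,\ldots,\nu\}$ and a frame $\mathbf{I} \in I_{2j}$, and count those colored Motzkin paths whose underlying Dyck path has frame $\mathbf{I}$. By definition the number of such underlying Dyck paths is $p^{2j}_{\mathbf{I}}$, and by the colored Dyck path theorem proved earlier in the paper, the colorings of their U- and D-steps contribute a factor $u_0^{v_0}\cdots u_{j-1}^{v_{j-1}} d_0^{v_0}\cdots d_{j-1}^{v_{j-1}}$, where the exponents $v_k$ depend only on $\mathbf{I}$ (and vanish for $k$ beyond the degree of $\mathbf{I}$, so the listed range $0,\ldots,j-1$ is harmless).

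Next, for a fixed underlying Dyck path with frame $\mathbf{I}=(i_0,\ldots,i_\nu,\ldots)$, I would count the colored insertions of exactly $n-2j$ horizontal steps. Such an insertion is specified by: (i) the numbers $k_0,\ldots,k_\nu$ of horizontal steps placed at levels $0,1,\ldots,\nu$, subject to $k_0+\cdots+k_\nu = n-2j$; (ii) at each level $r$, a distribution of the $k_r$ steps across the $i_r$ feet, recorded as an ordered tuple of nonnegative multiplicities summing to $k_r$; and (iii) a choice of color from $h_r$ options for each of the $k_r$ horizontal steps at level $r$. Step (ii) is a weak composition of $k_r$ into $i_r$ parts, giving $\binom{k_r+i_r-1}{k_r}$ possibilities, and (iii) contributes $h_r^{k_r}$. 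Multiplying over $r$ and summing over decompositions $k_0+\cdots+k_\nu = n-2j$ yields precisely the inner sum in (\ref{motzkin2}). Summing the resulting product over $\mathbf{I} \in I_{2j}$ and then over $j$ produces the claimed formula.

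The only subtle point, which I expect to be the main obstacle worth noting explicitly, is that when $i_r=0$ the multiset coefficient $\binom{k_r+i_r-1}{k_r} = \binom{k_r-1}{k_r}$ vanishes for $k_r>0$ and equals $1$ for $k_r=0$, so the formula automatically forbids inserting horizontal steps at a level that the underlying Dyck path does not visit, in agreement with the remark preceding the theorem; this also makes the outer sum genuinely finite. One must finally verify that the decomposition colored Motzkin path $\leftrightarrow$ (colored underlying Dyck path, colored insertion data) is indeed a bijection, so that no colored Motzkin path is counted twice; this is immediate because both the erasure map and the insertion map are deterministic and mutually inverse.
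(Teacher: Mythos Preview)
Your proposal is correct and follows essentially the same approach as the paper: the paper builds up to this theorem incrementally through the preceding propositions (decomposing a Motzkin path into an underlying Dyck path plus horizontal insertions, counting the insertions level-by-level via weak compositions, and invoking the colored Dyck path count for the $u_k^{v_k}d_k^{v_k}$ factors), and your argument synthesizes exactly these ingredients into a single bijective count. Your explicit remarks on the $i_r=0$ case and on the bijectivity of the erasure/insertion correspondence match the paper's own observations surrounding the theorem.
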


Notice that such formula makes sense  if we set $h_k=0$ for all those levels $k$ where there are no horizontal steps provided we  attribute value  1 to the expression $h_j^{k_j}$, if $h_j=0$ and $k_j=0$.

\begin{rem}
Comparing formula (\ref{motzkin1}) with (\ref{motzkin2}) written in the case of all $h_k=u_k=d_k=1$ yields the following identity for binomial coefficients, where we made obvious changes of symbols:
$${\binom{m+{i_0}+\cdots +i_\nu -1}{m}}  = \sum_{k_0+\cdots +k_\nu=m}\!{\binom{k_0+\!{i_0}\!-\!1}{k_0}}\!\cdots \!{\binom{k_\nu+\!{i_\nu}\!-\!1}{k_\nu}}.$$
\end{rem}

\end{document}